\begin{document}
\title{A two-level overlapping Schwarz method with energy-minimizing multiscale
coarse basis functions
\thanks{The research of Junxian Wang is supported by National Natural Science Foundation of China (Project number 11201398, 11301448). The research of Eric Chung is supported by Hong Kong RGC General Research Fund (Project number 14304217)
and CUHK Faculty of Science Research Incentive Fund 2017-18. The research of Hyea Hyun Kim is
supported by the National Research Foundation of Korea(NRF) grants
funded by NRF-2015R1A5A1009350.} }

\author{
Junxian Wang \thanks{School of Mathematics and Computational Science, Xiangtan University, Xiangtan, Hunan 411105, China.}
\and
Eric Chung \thanks{Department of Mathematics, The Chinese University of Hong Kong, Hong Kong SAR.}
\and
Hyea Hyun Kim \thanks{Department of Applied Mathematics and Institute of Natural Sciences, Kyung Hee University, Korea.
Electronic mail address: hhkim@khu.ac.kr.}
}

\maketitle


\begin{abstract}
A two-level overlapping Schwarz method is developed for second order elliptic problems
with highly oscillatory and high contrast coefficients, for which
it is known that the standard coarse problem fails to give a robust preconditioner.
In this paper, we develop energy minimizing multiscale finite element functions to form a more robust coarse problem.
First, a local spectral problem is solved in each non-overlapping coarse subdomain, and dominant eigenfunctions
are selected as auxiliary functions, which are crucial for the high contrast case.
The required multiscale basis functions are then obtained by minimizing an energy
subject to some orthogonality conditions with respect to the auxiliary functions.
Due to an exponential decay property, the minimization problem is solved
locally on oversampling subdomains, that are unions of a few coarse subdomains.
The coarse basis functions are therefore local and can be computed efficiently.
The resulting preconditioner is shown to be robust with respect to the contrast in the coefficients
as well as the overlapping width in the subdomain partition.
Numerical results are presented to validate the theory and show the performance.
\end{abstract}

\begin{keywords}
overlapping Schwarz method, high contrast, multiscale finite element basis, coarse problem
\end{keywords}

{ \bf AMS(MOS) subject classifications.} 65F10, 65N30, 65N55
\pagestyle{myheadings} \thispagestyle{plain} \markboth{Junxian Wang, Eric Chung, and Hyea Hyun Kim}{A TWO-LEVEL OVERLAPPING SCHWARZ ALGORITHM}

\section{Introduction}
A two-level overlapping Schwarz method is proposed for solving the algebraic equation obtained from the
finite element discretization of the following second order elliptic problem
\begin{equation}\label{model:equation}
\left\{
\begin{array}{rcll}
    -\nabla \cdot (\rho(x) \nabla u(x) ) &=& f(x), & \text{in}~\Omega,\\
    u&=&0, & \text{on}~\partial\Omega,
\end{array}
\right.
\end{equation}
where $\Omega$ is a polygonal (polyhedral) domain in $\mathbb{R}^r$  and $f(x)$ is in $L^2(\Omega)$,
the space of square integrable functions.
The coefficient $\rho(x)$ in the above model problem is highly heterogenous with high contrast
inside the domain $\Omega$. For an accurate approximation, very fine meshes are required to resolve the variations
in the coefficient and thus the resulting algebraic system can become very large.
In addition, the condition number of the algebraic system highly depends on the contrast in the coefficients.
For fast solutions of the algebraic system, we will propose a two-level overlapping Schwarz preconditioner.
We note that the standard coarse problem based on a coarse mesh
often fails to give a robust preconditioner~\cite{Galvis1}.
In this paper, the new idea is that the multiscale finite element functions proposed in \cite{CEL2018} are utilized to form a more robust coarse problem.

In \cite{CEL2018}, constrained energy minimizing multiscale
finite element functions are introduced for approximating the solution of a multiscale model problem
and the approximate solutions are shown to converge with the errors linearly decreasing with respect to the coarse mesh size and independent of the contrast in the coefficient $\rho(x)$.
By using the constrained energy minimizing basis functions, we will form the coarse component of
the two-level overlapping Schwarz method.
The proposed coarse space will give the
preconditioner of which performance is robust to both the high variation in the coefficient $\rho(x)$
and the overlapping width in the subdomain partition. This is improvement over
the previous study~\cite{Galvis1,Efendiev-EM2NA}, where the condition number depends on the overlapping width
in the partition.
We remark that a similar coarse space is used in \cite{hou2017sparse} for higher order elliptic operators.

Our theory begins with the global constrained energy minimizing functions. To find these functions, we need to solve the model problem
in the whole finite element space $V_h(\Omega)$.  These global functions are able to produce very robust preconditioners but
the resulting method is not efficient.
We thus propose a more practical method, where we solve the same problem in a smaller finite element space
$V_h(\widetilde{\Omega}_i)$, the restriction of $V_h(\Omega)$ to the space $H_0^1(\widetilde{\Omega}_i)$
and $\widetilde{\Omega}_i$ is a subregion containing a subdomain $\Omega_i$.
This approach is similar to the oversampling idea in multiscale finite element methods,
and is based on an exponential decay property of the global constrained energy minimizing functions.
Using these more practical functions, we can form a coarse problem for the two-level overlapping Schwarz
preconditioner. In addition, we can provide a complete analysis for the estimate of condition numbers for the preconditioner.
In detail, when the size of the oversampling region $\widetilde{\Omega}_i$
is large enough then the condition numbers are shown to be robust to the contrast in the coefficient
as well as the overlapping width in the partition. In numerical results, we can observe
quite robust results even for a small oversampling region, where
the region is formed by including only one layer of neighboring coarse meshes from $\Omega_i$.

We note that similar approaches are considered in other types of domain decomposition preconditioners.
In those works, the coarse problem is formed by enriching the standard coarse space
with eigenvectors adaptively chosen from appropriate generalized eigenvalue problems
on each subdomain or on each subdomain interface. We refer \cite{Mandel-CMAME-2007,Mandel-ABDDC,Dolean3,S-R-ref-2013,Klawonn:PAMM:2014,KimChung:2014,Klawonn-3d-2016,CW-3d-2016,PD-ETNA-2017,
IGA-adaptive-BDDC-2017,KimChungWang:2015}
for the works under the BDD(C) and FETI-DP framework and
\cite{Galvis1,Galvis2,Efendiev-EM2NA,Dolean1,Dolean2} for the works under the two-level additive Schwarz framework.
Our work is similar to that considered in \cite{Galvis1} and the main contribution of our work is the construction of a more robust coarse problem.

This paper is organized as follows. In Section~\ref{sec:coarsebasis}, the constrained energy minimizing functions
introduced in \cite{CEL2018} are defined using two bilinear forms that are relevant to the two-level
overlapping Schwarz framework. In Sections~\ref{sec:oS} and \ref{sec:AN}, the two-level overlapping
Schwarz preconditioner equipped with the coarse problem from the constrained energy minimizing functions
is proposed and its condition number bound is analyzed. In Section~\ref{sec:practical}, more practical coarse
basis functions are proposed and utilized to form the coarse problem. In addition,
an extensive analysis for the corresponding preconditioner
is carried out.
In Section~\ref{sec:numerics}, numerical results are presented to confirm the theoretical estimate
for the practical coarse problem.

\section{Constrained energy minimizing multiscale basis functions}\label{sec:coarsebasis}
We equip a conforming triangulation $\mathcal{T}_h$ for $\Omega$ and introduce $V_h$ as the standard conforming finite element space of piecewise linear functions corresponding to $\mathcal{T}_h$ with the zero value on $\partial \Omega$.
The Galerkin approximation to the model problem in \eqref{model:equation} then gives that:
find $u_h$ in $V_h$ such that
\begin{equation}\label{Galerkin:modelpb}
a(u_h,v)=(f,v),\quad \forall v \in V_h,
\end{equation}
where
$$a(u_h,v):=\int_{\Omega} \rho(x) \nabla u_h \cdot \nabla v \, dx,\quad
(f,v):=\int_{\Omega} fv \, dx.$$
We assume that the triangulation $\mathcal{T}_h$ is fine enough to resolve the variation in the coefficient $\rho(x)$, i.e.,
for a given constant $C$, the triangulation $\mathcal{T}_h$ satisfies that
\begin{equation}\label{assume:Th}
\frac{\max_{x \in \tau} \rho(x) }{\min_{x \in \tau} \rho(x) } \le C,\quad \forall \tau \in \mathcal{T}_h.
\end{equation}
The Galerkin approximation results in the following algebraic system
\begin{equation}\label{pb:algeqn} A U = F,
\end{equation}
for which we will propose a two-level overlapping Schwarz preconditioner robust
to the variations and contrast in the coefficient $\rho(x)$ and to the overlapping width in the subdomain partition.
For that purpose, we first form an auxiliary space by solving a certain generalized eigenvalue
problem in each subdomain and then find energy minimizing coarse basis functions with certain orthogonality
conditions with respect to the auxiliary space.

We partition the domain $\Omega$ into non-overlapping subdomains $\{\Omega_i\}_{i=1}^N$
where each $\Omega_i$ is a connected union of triangles in $\mathcal{T}_h$.
We then extend each subdomain by several layers of triangles in $\mathcal{T}_h$ to obtain an overlapping
subdomain partition $\{ \Omega_i^\prime \}_{i=1}^N$.
We use the notation $2 \delta$ for the minimum overlapping width in the overlapping subdomain partition, $H$ for the
maximum subdomain diameter in the non-overlapping subdomain partition, and $h$ for the maximum triangle diameter in the triangulation $\mathcal{T}_h$.
For the given overlapping subdomain partition, we introduce a partition of unity $\{ \theta_i(x) \}_{i=1}^N$,
where $\sum_{i=1}^N \theta_i(x)=1$ and each $\theta_i(x)$ is supported in $\Omega_{i}^\prime$.
In our work, we may assume that the partition of unity functions are in the space $V_h$.
We can choose a function $\theta_i(x)$ in $V_h$ with the following nodal values at any node $x$
interior to $\Omega_i^\prime$,
$$\theta_i(x)=\frac{1}{\mathcal{N}(x)}$$
and zero value at the rest nodes.
In the above, $\mathcal{N}(x)$ denotes the number of overlapping subdomains containing the node $x$.

We consider the following generalized eigenvalue problem in each non-overlapping subdomain $\Omega_i$:
\begin{equation}\label{pb:Geig}
a_i(\phi^{(i)}_j, w)=\lambda_j^{(i)} s_i( \phi^{(i)}_j, w),\quad \forall w \in V(\Omega_i),
\end{equation}
where $V(\Omega_i)$ is the restriction of functions in $V_h$ to the subdomain $\Omega_i$
and the above bilinear forms are defined as
$$a_i (v,w):=\int_{\Omega_i} \rho(x) \nabla v \cdot \nabla w\, dx,\quad
s_i (v,w):=\int_{\Omega_i} \rho(x) \sum_{l \in n(i)} |\nabla \theta_l(x)|^2 v\, w\, dx.$$
In the above, $n(i)$ denotes the set of overlapping subdomain indices $l$, i.e., $\Omega_l^\prime$, such that the support of the corresponding partition of unity function $\theta_l(x)$
has a nonempty intersection with $\Omega_i$.
We assume that the eigenvalues $\lambda^{(i)}_j$ are arranged in ascending order and we choose
the eigenvectors $\phi_j^{(i)}$ with their associate eigenvalues $\lambda_j^{(i)}$ smaller than
a given tolerance value $\Lambda$, i.e., $\lambda_j^{(i)} < \Lambda$.
We use the notation $l_i$ for the number of such eigenvectors.

We now obtain a set of auxiliary multiscale finite element functions by collecting
all the selected eigenvectors
$$V_{aux}:=\left\{  \phi_{j}^{(i)}\,| \, i=1,\cdots,N,\; j=1,\cdots,l_i\right\}.$$
We assume that $\phi_j^{(i)}$ are normalized, i.e., $s_i(\phi_j^{(i)},\phi_j^{(i)})=1$, $j=1,\cdots,l_i$.
We introduce the following definition for a function $v$ in $V_h$:
$v$ is $\phi_j^{(i)}$-orthogonal if $s_i(v,\phi_j^{(i)})=1$
and $s_k(v,\phi_l^{(k)})=0$ for $k \neq i,\; l=1,\cdots,l_k$ or $k=i,\; l=1,\cdots,j-1,j+1,\cdots,l_i$.
We obtain a set of coarse basis functions $\psi_{j}^{(i)}$ from the solution
of the following constrained minimization problem:
\begin{equation}\label{pb:cemin}
\psi_j^{(i)}=\text{argmin} \{ a(\psi,\psi) \, | \, \psi \in V_h,\; \psi \text{ is } \phi_j^{(i)}\text{-orthogonal}. \}
\end{equation}

We note that we can solve the above constrained minimization problem by introducing Lagrange multipliers $\eta_l^{(k)}$
for the constraints and form the following mixed problem: find $\psi_j^{(i)}$ and $\eta_l^{(k)}$ such that
\begin{eqnarray}\label{pb:mixed}
a(\psi_j^{(i)},v)+ \sum_{k=1}^N \sum_{l=1}^{l_k} \eta_l^{(k)} s_k(v,\phi_l^{(k)}) &=&0,\quad \forall v \in V_h \\
s_k(\psi_j^{(i)},\phi_l^{(k)})&=&\delta_{k,l}^{i,j},\; k=1,\cdots,N,\; l=1,\cdots,l_k,
\end{eqnarray}
where $\delta^{i,j}_{k,l}$ is one when $k=i$ and $l=j$, and its value is zero, otherwise.
We now define the space of coarse basis functions
$$V_{glb}=\text{span} \{ \psi_{j}^{(i)},\; i=1,\cdots,N,\; j=1,\cdots,l_i\}.$$

We introduce
\begin{equation}\label{wtV}
\widetilde{V}:=\{ v \in V_h \,|\, s_k(v, \phi_{l}^{(k)})=0,\, \forall k=1,\cdots,N, l=1,\cdots,l_k\}.
\end{equation}
By the first equation in \eqref{pb:mixed}, we observe the following orthogonal property
$$a(\psi_{j}^{(i)}, v)=0,\quad \forall v \in \widetilde{V}$$
and thus obtain that
$$V_h=\widetilde{V} \oplus V_{glb}.$$
We note that $V_h={V}_{glb}^\perp \oplus V_{glb}$ and $\widetilde{V}$ is contained in $V_{glb}^\perp$.
Since the dimension of $V_{glb}^{\perp}$ is equal to the dimension of $\widetilde{V}$, we have
$\widetilde{V}=V_{glb}^{\perp}$.
For a proof, see \cite{CEL2018}.

As proposed in \cite{CEL2018}, we can consider a more practical relaxed constrained energy minimizing problem:
\begin{equation}\label{relaxed-energy-min}\psi_j^{(i)}=\text{argmin}\left\{ a(\psi,\psi)+s_i(\pi_i \psi - \phi_j^{(i)}, \pi_i \psi - \phi_j^{(i)})+\sum_{k \ne i} s_k(\pi_k \psi, \pi_k \psi) \,|\,
\forall \psi \in V_h \right\},
\end{equation}
where
\begin{equation}\label{def:pi}\pi_k \psi:=\sum_{j=1}^{l_k} s_k(\psi,\phi_j^{(k)}) \phi_j^{(k)}.
\end{equation}
We note that the function $\psi_j^{(i)}$ in \eqref{relaxed-energy-min} can be found by solving the following problem:
find $\psi_j^{(i)}$ in $V_h$ such that
\begin{equation}\label{eqn:relaxed-energy-min}
a(\psi_j^{(i)}, v)+ \sum_{k=1}^N s_k(\pi_k \psi_j^{(i)}, \pi_k v)=s_i(\phi_j^{(i)}, \pi_i v ),\quad \forall v \in V_h.
\end{equation}
Let $V_{glb}$ be obtained from $\psi_j^{(i)}$ of the above relaxed constrained problem.
We can then observe the same property for $V_{glb}$ as before (\cite{CEL2018}), i.e.,
$$V_h=\widetilde{V} \oplus V_{glb}.$$
In the following, we will use the space $V_{glb}$ from the relaxed constrained problem~\eqref{relaxed-energy-min} as the coarse space of
the two-level overlapping Schwarz algorithm.

\section{Two-level overlapping Schwarz algorithm}\label{sec:oS}
In this section, we propose a two-level overlapping Schwarz preconditioner for the algebraic equation
in \eqref{pb:algeqn}.
We note that we will use the functions in $V_{glb}$ to form the coarse problem of the preconditioner
and the overlapping subdomain partition $\{ \Omega_i^\prime \}_{i=1}^N$ to form
the local problems of the preconditioner.

We introduce the local finite element space $V_0(\Omega_i^\prime)$, which is the restriction
of functions in $V_h$ to $\Omega_i^\prime$ with the zero value on $\partial \Omega_i^\prime$.
We define the local problem matrix by
$$\langle A_{i} v, w \rangle :=\int_{\Omega_i^\prime} \rho(x) \nabla v \cdot \nabla w \, dx,\; \forall
v,w \in V_0(\Omega_i^\prime).$$
We introduce the restriction $R_i$ from $V_h$ to $V_0(\Omega_i^\prime)$
and denote by $R_i^T$ the extension from $V_0(\Omega_i^\prime)$ to $V_h$ by zero.

We define the coarse problem
matrix by
$$A_0=a(\psi_j^{(i)}, \psi_r^{(k)}), \forall \psi_j^{(i)}, \psi_r^{(k)} \in V_{glb}.$$
We note that the size of the matrix $A_0$ is identical to the dimension of $V_{glb}$.
We introduce $R_0$ by the matrix with its rows consisting of nodal values of $\psi_j^{(i)}$ in $V_{glb}$
and define the two-level overlapping Schwarz preconditioner as
\begin{equation}\label{osprecond}
\sum_{i=1}^N R_i^T A_i^{-1} R_i + R_0^T A_0^{-1} R_0.
\end{equation}

\section{Analysis of condition numbers}\label{sec:AN}
For the overlapping Schwarz method, the upper bound estimate can be obtained from the coloring argument.
We will only need to work on the following lower bound estimate, see \cite{TW-Book} for the abstract theory of the two-level overlapping Schwarz method:
\begin{lemma}\label{lemma:stable:decomposition:Vglb}
For any given $u$ in $V_h$, there exists $\{u_i \}_{i=0}^N$ with $u_0 \in V_{glb}$, and $u_i \in V_0(\Omega_i^\prime)$, $i\ge1$, such that
$$u=\sum_{i=1}^N u_i + u_0$$
and
$$\sum_{i=1}^N a(u_i,u_i) + a(u_0,u_0) \le C_0^2 a(u,u)$$
with the constant $C_0$ dependent on $\Lambda$ but independent of $\rho(x)$ and $\delta$.
\end{lemma}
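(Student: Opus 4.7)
The plan is to use an $a$-orthogonal splitting of $u$ into a coarse component in $V_{glb}$ and a residue in $\widetilde{V}$, and then distribute the residue to the overlapping subdomains via the partition of unity $\{\theta_i\}$. The essential new ingredient is the spectral bound supplied by the eigenvalue problem~\eqref{pb:Geig}, which converts a term that would normally scale like $1/\delta^2$ into one that scales like $1/\Lambda$, independent of $\delta$ and of the contrast in $\rho$.

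For the coarse piece, testing the Euler--Lagrange equation~\eqref{eqn:relaxed-energy-min} against any $v \in \widetilde{V}$ annihilates both $s_k$-terms (since $\pi_k v = 0$ by definition of $\widetilde{V}$), leaving $a(\psi_j^{(i)}, v) = 0$. Thus $V_h = \widetilde{V} \oplus V_{glb}$ is $a$-orthogonal. I would take $u_0$ to be the $a$-orthogonal projection of $u$ onto $V_{glb}$, so that $w := u - u_0 \in \widetilde{V}$ and
\begin{equation*}
a(u_0, u_0) + a(w, w) = a(u, u),
\end{equation*}
which in particular gives $a(u_0, u_0) \le a(u, u)$.

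For the local pieces I would set $u_i := I_h(\theta_i w)$, where $I_h$ is nodal Lagrange interpolation onto $V_h$. Since $\theta_i$ is supported in $\Omega_i^\prime$ and vanishes on $\partial \Omega_i^\prime$, we have $u_i \in V_0(\Omega_i^\prime)$; moreover $\sum_i u_i = I_h(w) = w$ since $\sum_i \theta_i = 1$, so $u = u_0 + \sum_i u_i$. Using assumption~\eqref{assume:Th} to pass from $\theta_i w$ to its interpolant element by element and then applying the product rule, I expect an estimate of the form
\begin{equation*}
\sum_{i=1}^N a(u_i, u_i) \le C \int_\Omega \rho \Bigl( \sum_i \theta_i^2 |\nabla w|^2 + \sum_i |\nabla \theta_i|^2 w^2 \Bigr)\, dx.
\end{equation*}
The first term is bounded by $C\, a(w, w)$ via $\sum_i \theta_i^2 \le \sum_i \theta_i = 1$, and by grouping the integral over the non-overlapping pieces $\Omega_k$ and remembering that on $\Omega_k$ the sum over $i$ reduces to $i \in n(k)$, the second term rewrites as $C \sum_k s_k(w, w)$.

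The decisive step is the spectral bound. Since $w \in \widetilde{V}$ yields $s_k(w, \phi_l^{(k)}) = 0$ for every selected eigenvector, the Courant--Fischer min-max principle applied to~\eqref{pb:Geig} on $V(\Omega_i)$ gives $s_i(w, w) \le a_i(w, w)/\lambda_{l_i+1}^{(i)} \le a_i(w, w)/\Lambda$, using that by construction $\lambda_{l_i+1}^{(i)} \ge \Lambda$. Summing over $i$ and combining with the previous step,
\begin{equation*}
\sum_{i=1}^N a(u_i, u_i) + a(u_0, u_0) \le C\bigl(1 + \Lambda^{-1}\bigr)\, a(w, w) + a(u_0, u_0) \le C\bigl(1 + \Lambda^{-1}\bigr)\, a(u, u),
\end{equation*}
so $C_0^2 = C(1+\Lambda^{-1})$ works. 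I anticipate that the main technical obstacle will be the interpolation step $\theta_i w \mapsto I_h(\theta_i w)$, which must be carried out with a constant independent of $\rho$ and $\delta$. Assumption~\eqref{assume:Th} is decisive here, since it lets one localize the interpolation error to elements on which $\rho$ is essentially constant, and the fact that both $\theta_i$ and $w$ lie in $V_h$ keeps their product piecewise quadratic so that the standard linear interpolation estimate closes the argument cleanly.
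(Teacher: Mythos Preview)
Your proposal is correct and follows essentially the same route as the paper: take $u_0$ as the $a$-orthogonal projection onto $V_{glb}$, localize the residue $w=u-u_0\in\widetilde{V}$ via $u_i=I^h(\theta_i w)$, regroup the $|\nabla\theta_i|^2$ term over the non-overlapping subdomains to recognize $\sum_i s_i(w,w)$, and invoke the spectral bound $s_i(w,w)\le\Lambda^{-1}a_i(w,w)$ for $w\in\widetilde{V}$ to finish with $C_0^2=C(1+\Lambda^{-1})$. The paper's proof is organized in exactly this order, with the same ingredients and the same final constant.
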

\begin{proof}
We will choose $u_0$ as the solution of
$$a(u_0,v)=a(u,v),\quad \forall v \in V_{glb}$$
and choose $u_i$ as
$$u_i=I^h(\theta_i(u-u_0)),$$
where $I^h(v)$ denotes the nodal interpolant of $v$ to the space $V_h$.
We note that $u-u_0$ is in $V_{glb}^\perp$ and also in $\widetilde{V}$, since $V_{glb}^\perp=\widetilde{V}$.
This nice property of $u-u_0$ will be used in the following estimates.

We can see that $u_i$ is supported in $\Omega_{i}^\prime$ by the construction and
then obtain that
\begin{align}
&\sum_{i=1}^N a(u_i,u_i) + a(u_0,u_0) \nonumber \\
&= \sum_{i=1}^N \int_{\Omega_{i}^\prime} \rho | \nabla I^h( \theta_i (u-u_0))|^2 \, dx + a(u_0,u_0) \nonumber \\
&\le C_I\sum_{i=1}^N \int_{\Omega_{i}^\prime} \rho |\nabla( \theta_i (u-u_0))|^2 \, dx + a(u_0,u_0)\label{star1} \\
&\le 2 C_I \sum_{i=1}^N \left( \int_{\Omega_{i}^\prime} \rho |\nabla(u-u_0)|^2\, dx+
\int_{\Omega_{i}^\prime} \rho |\nabla \theta_i |^2 (u-u_0)^2\, dx \right) + a(u_0,u_0)\nonumber\\
&\le 2 C C_I \sum_{i=1}^N \int_{\Omega_{i}} \rho |\nabla(u-u_0)|^2\, dx+
2 C_I \sum_{i=1}^N \int_{\Omega_{i}} \rho \sum_{k \in n(i)} |\nabla \theta_k |^2 (u-u_0)^2\, dx + a(u_0,u_0) \nonumber \\
&= 2 C C_I \sum_{i=1}^N \int_{\Omega_{i}} \rho |\nabla(u-u_0)|^2\, dx+
2 C_I \sum_{i=1}^N s_i(u-u_0, u-u_0) + a(u_0,u_0) \nonumber \\
&\le 2 C C_I \sum_{i=1}^N (1+ \Lambda^{-1}) \int_{\Omega_{i}} \rho |\nabla(u-u_0)|^2 \, dx + a(u_0,u_0) \label{star2} \\
&= 2 C C_I (1+\Lambda^{-1} ) a(u-u_0,u-u_0)+ a(u_0,u_0) \nonumber \\
&\le 2 C C_I (1+\Lambda^{-1} ) \left( a(u-u_0,u-u_0)+a(u_0,u_0) \right) \nonumber \\
&= 2 C C_I (1+\Lambda^{-1}) a(u,u), \label{star3}
\end{align}
where the constant $C_I$ depends on the stability of the nodal interpolation $I^h$, the constant $C$ depends on the maximum number of overlapping subdomains sharing the same location in $\Omega$, the notation $n(i)$ means the set of
overlapping subdomain indices $l$
such that $\Omega_l^{\prime}$ intersects with $\Omega_i$.
In \eqref{star1}, we use the assumption~\eqref{assume:Th} on the triangulation $\mathcal{T}_h$.
We note that $u-u_0$ is in $V_{glb}^\perp(=\widetilde{V})$ and thus in \eqref{star2} we can use the following inequality with the constant $\Lambda^{-1}$
$$s_i(u-u_0,u-u_0) \le \Lambda^{-1} a_i(u-u_0,u-u_0)$$
and finally obtain \eqref{star3}. We note that we obtain the constant $C_0^2=2 C C_I (1+ \Lambda^{-1})$ independent
of $\rho(x)$ as well as the overlapping width, which is improvement over the previous works~\cite{Galvis1,Galvis2,Efendiev-EM2NA}.
\end{proof}

We note that the computation of $\psi_j^{(i)}$ requires solution of the relaxed constrained minimization problem in
the global finite element space $V_h$. In practice, we can solve the same problem in a subspace of $V_h$,
where the functions are restricted to the local region $\widetilde{\Omega}_{i}$ containing $\Omega_i$.
In a more detail, we solve
$$\psi_{j,ms}^{(i)}=\text{argmin}\left\{  a(\psi,\psi)+s_i(\pi_i(\psi)-\phi_j^{(i)},\pi_i(\psi)-\phi_j^{(i)})
+\sum_{k \ne i} s_k (\pi_k \psi, \pi_k \psi) \, \left|
\, \forall \psi \in \widetilde{V}_i \right. \right\},$$
where $\widetilde{V}_i$ denotes the restriction of $V_h$ to the subregion $\widetilde{\Omega}_i$
with zero value on $\partial \widetilde{\Omega}_i$, i.e., $\widetilde{V}_i=V_h \bigcap H_0^1(\widetilde{\Omega}_i)$.
From the above minimization problem,
we obtain $\psi_{j,ms}^{(i)}$ and denote by $\Psi_{j,ms}^{(i)}$ the extension of $\psi_{j,ms}^{(i)}$
by zero to the function in $V_h$.
We then define $V_{ms}$ by
$$V_{ms}:=\text{span}\{ \Psi_{ms,j}^{(i)}\,|\, i=1,\cdots,N,\;j=1,\cdots,l_i \}.$$
We can propose the following more practical preconditioner
\begin{equation}\label{precond:Mms}M^{-1}_{ms}=\sum_{i=1}^N R_i^T A_i^{-1} R_i + R_{0,ms}^T A_{0,ms}^{-1} R_{0,ms},
\end{equation}
where $A_{0,ms}$ and $R_{0,ms}$ are defined similarly as before by
replacing $V_{glb}$ with $V_{ms}$.
We remark that the choice of $\widetilde{\Omega}_{i}$ will be discussed next.
In short, it is an oversampled region of $\Omega_i$ obtained by extending it by several neighboring
subdomains.
Hence, computing (\ref{precond:Mms}) is relatively cheap.

\section{Analysis of condition numbers using $V_{ms}$ as a coarse space}\label{sec:practical}
In this section, we will provide a rigorous proof for analysis of condition numbers
for the two-level overlapping Schwarz preconditioner $M^{-1}_{ms}$, where a more practical coarse space
$V_{ms}$ is employed.

We recall the function $u_0$ in $V_{glb}$ obtained from
$$a(u_0, v)=a(u,v),\quad \forall v \in V_{glb}$$
and express $u_0$ as
$$u_0=\sum_{i=1}^N \sum_{j=1}^{l_i} c_{ij} \psi_j^{(i)}.$$
We then introduce
\begin{equation}\label{def:ums}u_{ms}=\sum_{i=1}^N \sum_{j=1}^{l_i} c_{ij} \Psi_{j,ms}^{(i)}
\end{equation}
with the same coefficients $c_{ij}$ as in $u_0$,
and
\begin{equation}\label{def:ui}
u_i=I^h(\theta_i (u-u_{ms})).
\end{equation}
We can then decompose $u$ as a sum of these functions,
$$u= \sum_{i=1}^N u_i + u_{ms}$$
and we will prove that
$$\sum_{i=1}^N a(u_i,u_i)+ a(u_{ms},u_{ms}) \le C a(u,u),$$
where the constant $C$
depends on $\Lambda$ but does not depend on $\rho(x)$ and the overlapping width in the partition
$\{ \Omega_i^\prime \}$.

We first obtain that
\begin{align}
a(u_{ms},u_{ms})&\le  2 a(u,u) + 2 a(u-u_{ms}, u-u_{ms}) \nonumber \\
 &\le  2 a(u,u)+4 a(u-u_0,u-u_0)+ 4 a(u_0-u_{ms},u_0-u_{ms}) \nonumber \\
 &\le 6 a(u,u) + 4 a(u_0-u_{ms},u_0-u_{ms}), \label{coarse:term}
\end{align}
where $a(u-u_0,u-u_0)$ can be bounded by $a(u,u)$ using the orthogonality.
In the following, we will use the notation $\sum_i$ for $\sum_{i=1}^N$ for brevity.

We now consider
\begin{eqnarray*}
&& \sum_i a(u_i,u_i)=\sum_i a(I^h(\theta_i (u-u_{ms})), I^h(\theta_i (u-u_{ms})) ) \nonumber \\
& \le & C_I \sum_i a(\theta_i (u-u_{ms}), \theta_i (u-u_{ms})) \nonumber \\
& \le & 2 C_I \sum_i \int_{\Omega_i^\prime} \rho(x) |\nabla (u-u_{ms})|^2 \, dx
+ 2 C_I \sum_i \int_{\Omega_i^\prime} \rho(x) |\nabla \theta_i(x)|^2 (u-u_{ms})^2 \, dx \nonumber \\
& \le & 2 C C_I \sum_i \int_{\Omega_i} \rho(x) |\nabla(u-u_{ms})|^2 \, dx
+ 2 C_I \sum_i \int_{\Omega_i} \rho(x) \sum_{l\in n(i)} |\nabla \theta_l(x)|^2 (u-u_{ms})^2 \, dx \nonumber \\
&\le & 2 C C_I \left( a(u-u_{ms},u-u_{ms}) + \sum_{i} s_i(u-u_{ms},u-u_{ms}) \right).\label{bd:pb2}
\end{eqnarray*}
Using that $$a(u-u_0,u-u_0)+\sum_i s_i(u-u_0,u-u_0) \le (1+\Lambda^{-1}) a(u,u),$$
we obtain
\begin{align*}
\sum_i a(u_i,u_i) & \le 4 C C_I \left((1+\Lambda^{-1})a(u,u) + a(u_0-u_{ms},u_0-u_{ms}) \right. \\
&  \left. +\sum_i s_i(u_0-u_{ms},u_0-u_{ms})\right).
\end{align*}
Combining \eqref{coarse:term} and the above,
the estimate for $\sum_i a(u_i,u_i)+ a(u_{ms},u_{ms}) < C a(u,u)$ is reduced to
\begin{equation}\label{estimate:main}
a(u_0-u_{ms},u_0-u_{ms})+\sum_{i} s_i(u_0-u_{ms},u_0-u_{ms})
\le C_0^2 a(u,u).
\end{equation}

For that purpose, we will obtain several preliminary inequalities below.
We recall the definition for $\widetilde{V}_i$, i.e.,
$\widetilde{V}_i:=V_h \bigcap H_0^1(\widetilde{\Omega}_i)$, where $\widetilde{\Omega}_i$
is the local subregion containing $\Omega_i$, which will be defined later.
\begin{lemma}\label{lemma:min:property}
For any $v$ in $\widetilde{V}_i$, we obtain that
\begin{align*}
&&a(\psi_j^{(i)}-\Psi_{j,ms}^{(i)},\psi_j^{(i)}-\Psi_{j,ms}^{(i)})
+\sum_l s_l( \pi_l (\psi_j^{(i)}-\Psi_{j,ms}^{(i)}),\pi_l (\psi_j^{(i)}-\Psi_{j,ms}^{(i)}))\\
&&\le
a(\psi_j^{(i)}-v,\psi_j^{(i)}-v)
+\sum_l s_l( \pi_l (\psi_j^{(i)}-v),\pi_l (\psi_j^{(i)}-v)).
\end{align*}
\end{lemma}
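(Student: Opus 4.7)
The plan is to recognize the inequality as a minimization statement: both sides are values of a single quadratic functional
$$F(w) := a(\psi_j^{(i)}-w,\psi_j^{(i)}-w) + \sum_l s_l\bigl(\pi_l(\psi_j^{(i)}-w),\pi_l(\psi_j^{(i)}-w)\bigr),$$
and the claim is that among $w \in \widetilde{V}_i$ the minimum is attained at $w = \Psi_{j,ms}^{(i)}$. Since $\Psi_{j,ms}^{(i)}$ is defined (after zero extension) as the minimizer over $\widetilde{V}_i$ of the different quadratic energy
$$G(w) := a(w,w) + s_i(\pi_i w - \phi_j^{(i)}, \pi_i w - \phi_j^{(i)}) + \sum_{k \ne i} s_k(\pi_k w, \pi_k w),$$
the core of the argument is to show that $F$ and $G$ coincide up to an additive constant on all of $V_h$.

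First I would expand $F(w)$ and $G(w)$ into quadratic, linear, and constant parts in $w$. The quadratic parts agree immediately, both reducing to $a(w,w) + \sum_l s_l(\pi_l w, \pi_l w)$, so the content lies entirely in the linear terms. A short computation shows that the linear part of $F(w) - G(w)$ is exactly
$$-2\Bigl[a(\psi_j^{(i)}, w) + \sum_l s_l(\pi_l \psi_j^{(i)}, \pi_l w) - s_i(\phi_j^{(i)}, \pi_i w)\Bigr],$$
which vanishes for every $w \in V_h$ by the Euler--Lagrange equation \eqref{eqn:relaxed-energy-min} characterizing $\psi_j^{(i)}$, tested against $v = w$. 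Hence $F(w) = G(w) + C$ with $C$ independent of $w$, so any minimizer of $G$ over $\widetilde{V}_i$ is also a minimizer of $F$ over $\widetilde{V}_i$, and the lemma follows directly from the defining property of $\psi_{j,ms}^{(i)}$.

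The only point requiring mild care is the identification of $\Psi_{j,ms}^{(i)} \in V_h$ with $\psi_{j,ms}^{(i)} \in \widetilde{V}_i$ inside $F$: because $a(\cdot,\cdot)$ and each $s_k(\cdot,\cdot)$ are integrals whose integrands vanish wherever $\Psi_{j,ms}^{(i)}$ is zero, the values of $F$ on the two representations coincide, so the zero extension is legitimate. I do not expect any genuine analytic obstacle here; the lemma is essentially an algebraic identity combined with the Galerkin orthogonality for $\psi_j^{(i)}$ and the minimization principle defining $\psi_{j,ms}^{(i)}$. An alternative but equivalent route would be to verify the first-order optimality condition $a(\psi_j^{(i)} - \Psi_{j,ms}^{(i)}, v) + \sum_l s_l(\pi_l(\psi_j^{(i)} - \Psi_{j,ms}^{(i)}), \pi_l v) = 0$ for all $v \in \widetilde{V}_i$ by subtracting the variational equations for $\psi_j^{(i)}$ and $\Psi_{j,ms}^{(i)}$ and then invoking strict convexity of $F$; this yields the same conclusion through essentially the same algebra.
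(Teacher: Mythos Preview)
Your proposal is correct. The paper's proof is exactly the ``alternative route'' you sketch at the end: it records the Galerkin orthogonality
\[
a(\psi_j^{(i)}-\Psi_{j,ms}^{(i)},v)+\sum_l s_l\bigl(\pi_l(\psi_j^{(i)}-\Psi_{j,ms}^{(i)}),\pi_l v\bigr)=0,\quad \forall v\in\widetilde{V}_i,
\]
obtained by subtracting the Euler--Lagrange equation for $\Psi_{j,ms}^{(i)}$ on $\widetilde{V}_i$ from \eqref{eqn:relaxed-energy-min} restricted to $\widetilde{V}_i$, and then invokes the resulting best-approximation property. Your main presentation via $F(w)=G(w)+C$ is just the integrated form of the same identity, so the two arguments are the same in substance.
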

\begin{proof}
Using the orthogonality,
$$a(\psi_j^{(i)}-\Psi_{j,ms}^{(i)},v)+\sum_l s_l(\pi_l( \psi_j^{(i)}-\Psi_{j,ms}^{(i)}), \pi_l v)=0,\quad
\forall v \in \widetilde{V}_i,$$
we can obtain the above inequality.
\end{proof}

For simplicity, we will use the following notations below,
$$|v|_{a(R)}^2=\int_{R} \rho(x) | \nabla v|^2 \, dx,\quad \| v \|_{s_l}^2:=s_l (v , v).$$
We introduce a region $\Omega_{i,k}$ by extending $\Omega_i$ by $k$ layers of neighboring subdomains $\Omega_l$
and define a function
$$\chi_i^k=\sum_{\Omega_l \subset \Omega_{i,k}} \theta_{l},$$
where $\theta_l$ is the partition of unity function defined for the overlapping subdomains $\Omega_l^\prime$,
that are obtained by extending $\Omega_l$ with several layers of fine meshes (i.e. elements in $\mathcal{T}_h$).
We let $d$ be the number of such fine mesh layers and then the overlapping width $2 \delta$  becomes
$2dh$ in the resulting overlapping partition $\{\Omega_l^\prime\}$.
We then have the following property of $\chi_i^k$,
$$\chi_i^k=1 \text{ on } \Omega_{i,k}^{\delta-}, \quad \chi_i^k=0 \text{ on } \Omega \setminus \Omega_{i,k}^{\delta},$$
where $\Omega_{i,k}^{\delta-}$ is the subset of $\Omega_{i,k}$ which is obtained by deleting $d$ layers of fine  meshes from $\Omega_{i,k}$ and $\Omega_{i,k}^\delta$ is obtained by extending $\Omega_{i,k}$ by $d$ layers
of fine meshes. In the above Lemma~\ref{lemma:min:property}, one can choose $\widetilde{\Omega}_i$ as $\Omega_{i,k}^\delta$
and thus choose $v=I^h( \chi_i^k \psi_j^{(i)})\in \widetilde{V}_i$ to obtain the following estimate:
\begin{lemma}\label{lemma:estimate:difference}
We obtain that
\begin{align*}
&| \psi_j^{(i)}-\Psi_{j,ms}^{(i)}|_{a(\Omega)}^2 + \sum_l \| \pi_l (\psi_j^{(i)}-\Psi_{j,ms}^{(i)}) \|_{s_l}^2 \\
\le & C_I \left(  (1+\Lambda^{-1}) | \psi_j^{(i)}|_{a(\Omega \setminus \Omega_{i,k-1})}^2
+ \sum_{\Omega_l \subset \Omega \setminus \Omega_{i,k-1}} \| \pi_l \psi_j^{(i)} \|_{s_l}^2 \right),
\end{align*}
where the constant $C_I$ depends on the continuity of the nodal interpolant $I^h(v)$ in $H^1$-seminorm
and $L^2$-norm and the assumption on $\mathcal{T}_h$ in \eqref{assume:Th} is used.
\end{lemma}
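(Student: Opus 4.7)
The plan is to invoke Lemma~\ref{lemma:min:property} with the explicit trial function $v = I^h(\chi_i^k \psi_j^{(i)})\in\widetilde{V}_i$ and then bound the resulting right-hand side term by term. Since $\psi_j^{(i)}\in V_h$, one can rewrite $\psi_j^{(i)} - v = I^h((1-\chi_i^k)\psi_j^{(i)})$, and by the support properties of $\chi_i^k$ (equal to $1$ on $\Omega_{i,k}^{\delta-}$, which contains $\Omega_{i,k-1}$ once $dh\le H$), this difference is supported in the outer ring $\Omega\setminus\Omega_{i,k-1}$. The task therefore reduces to controlling the $a$-energy and the $s_l$-masses of a cut-off of $\psi_j^{(i)}$ that only sees the complement of $\Omega_{i,k-1}$.

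First I would apply the $H^1$-stability of the nodal interpolant $I^h$, which uses the quasi-monotonicity of $\rho$ on each element from \eqref{assume:Th}, and expand with the product rule to obtain
$$|\psi_j^{(i)}-v|_{a(\Omega)}^2 \le 2C_I\int_{\Omega\setminus\Omega_{i,k-1}}\rho\bigl((1-\chi_i^k)^2|\nabla\psi_j^{(i)}|^2 + |\nabla\chi_i^k|^2(\psi_j^{(i)})^2\bigr)\,dx.$$
The first integrand is bounded pointwise by $|\nabla\psi_j^{(i)}|^2$ and contributes $|\psi_j^{(i)}|_{a(\Omega\setminus\Omega_{i,k-1})}^2$. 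For the second, writing $\chi_i^k = \sum_{\Omega_m\subset\Omega_{i,k}}\theta_m$ and applying a pointwise Cauchy--Schwarz yields $|\nabla\chi_i^k|^2\le C\sum_{m\in n(l)}|\nabla\theta_m|^2$ on each $\Omega_l$, so
$$\int_{\Omega_l}\rho|\nabla\chi_i^k|^2(\psi_j^{(i)})^2\,dx \le C\,\|\psi_j^{(i)}\|_{s_l}^2,$$
with only the $\Omega_l$ in the transition ring (hence inside $\Omega\setminus\Omega_{i,k-1}$) contributing. The sum $\sum_l\|\pi_l(\psi_j^{(i)}-v)\|_{s_l}^2$ on the left is handled analogously: non-expansiveness of the $s_l$-orthogonal projector $\pi_l$ and the weighted-$L^2$-stability of $I^h$ reduce it again to $\sum_{\Omega_l\subset\Omega\setminus\Omega_{i,k-1}}\|\psi_j^{(i)}\|_{s_l}^2$.

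The closing step converts each $\|\psi_j^{(i)}\|_{s_l}^2$ into the $\|\pi_l\psi_j^{(i)}\|_{s_l}^2$ that appears on the right-hand side. Because $\psi_j^{(i)} - \pi_l\psi_j^{(i)}$ is $s_l$-orthogonal to the first $l_l$ eigenfunctions of \eqref{pb:Geig}, whose eigenvalues are the only ones below $\Lambda$, the spectral gap yields
$$\|\psi_j^{(i)}\|_{s_l}^2 \le \|\pi_l\psi_j^{(i)}\|_{s_l}^2 + \Lambda^{-1}|\psi_j^{(i)}|_{a(\Omega_l)}^2.$$
Summing over the relevant $\Omega_l\subset\Omega\setminus\Omega_{i,k-1}$ and combining with the direct contribution of $(1-\chi_i^k)^2|\nabla\psi_j^{(i)}|^2$ reproduces exactly the right-hand side of the claim, absorbing the mesh-dependent constants into $C_I$ and the spectral constant into the $(1+\Lambda^{-1})$ factor.

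The main obstacle I anticipate is the geometric bookkeeping around the three nested regions $\Omega_{i,k-1}$, $\Omega_{i,k}^{\delta-}$, and $\Omega_{i,k}^\delta$: one must verify that $\chi_i^k$ is identically $1$ on $\Omega_{i,k-1}$ so the cut-off vanishes there, and that every $\Omega_l$ appearing in the $s_l$-sums sits in $\Omega\setminus\Omega_{i,k-1}$ rather than leaking into the interior. Beyond this geometry, the proof is a routine combination of interpolation stability, the partition-of-unity gradient bound, and the spectral gap built into the choice of $V_{\mathrm{aux}}$.
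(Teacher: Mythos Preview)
Your proposal is correct and follows essentially the same route as the paper: choose $v=I^h(\chi_i^k\psi_j^{(i)})$ in Lemma~\ref{lemma:min:property}, rewrite $\psi_j^{(i)}-v=I^h((1-\chi_i^k)\psi_j^{(i)})$, estimate it element-wise via the interpolation stability and assumption~\eqref{assume:Th}, and then apply the spectral identity $\|v\|_{s_l}^2=\|(1-\pi_l)v\|_{s_l}^2+\|\pi_l v\|_{s_l}^2\le\Lambda^{-1}|v|_{a(\Omega_l)}^2+\|\pi_l v\|_{s_l}^2$. Your write-up simply spells out in more detail the steps the paper leaves as a sketch.
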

\begin{proof}
The estimates can be shown by using Cauchy-Schwarz inequality and by using the inequality
$$\| v \|_{s_l}^2 = \| (1-\pi_l ) v \|_{s_l}^2 + \| \pi_l v \|_{s_l}^2  \le \Lambda^{-1} |v|_{a(\Omega_l)}^2 +
\| \pi_l v \|_{s_l}^2.$$
The term $I^h((1-\chi_i^k)\psi_j^{(i)})$ is estimated for each triangle in $\mathcal{T}_h$ using
the assumption~\eqref{assume:Th}, the continuity of the nodal interpolant, and $\theta_l$ in $V_h$, i.e., $|\nabla \theta_l|$
are constant in each triangle.
\end{proof}

\begin{lemma}\label{lemma:kcbyk2}
For $\psi_j^{(i)}$ and $k\ge2$, we obtain that
\begin{align}
&\| \psi_j^{(i)} \|_{a(\Omega \setminus \Omega_{i,k})}^2
+\sum_{\Omega_l \subset \Omega \setminus \Omega_{i,k}} \| \pi_l \psi_j^{(i)} \|_{s_l}^2 \nonumber \\
&\le C (1+ \Lambda^{-1} ) \left( \| \psi_j^{(i)} \|_{a(\Omega_{i,k} \setminus \Omega_{i,k-2})}^2
+ \sum_{\Omega_l \subset \Omega_{i,k} \setminus \Omega_{i,k-2}} \| \pi_l \psi_j^{(i)} \|_{s_l}^2 \right). \label{bd:star3}
\end{align}
\end{lemma}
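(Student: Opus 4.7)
The plan is a standard Caccioppoli-style cutoff argument that exploits the variational identity \eqref{eqn:relaxed-energy-min} satisfied by $\psi_j^{(i)}$, together with the partition-of-unity cutoff $\chi_i^{k-1}$ introduced just before Lemma~\ref{lemma:estimate:difference}. The crucial observation is that whenever we test \eqref{eqn:relaxed-energy-min} with a function $v\in V_h$ that vanishes on $\Omega_i$, we have $\pi_i v=0$, so the source term $s_i(\phi_j^{(i)},\pi_i v)$ drops out and we are left with a homogeneous identity that can be used to transfer energy between concentric annuli.

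Concretely, I will set $\eta:=1-\chi_i^{k-1}$, so that $\eta\equiv 0$ on $\Omega_{i,k-1}^{\delta-}$ (a set containing $\Omega_i$ once $k\ge 2$), $\eta\equiv 1$ on $\Omega\setminus\Omega_{i,k-1}^{\delta}$, and $\nabla\eta$ is supported in the thin shell $\Omega_{i,k-1}^{\delta}\setminus\Omega_{i,k-1}^{\delta-}\subset\Omega_{i,k}\setminus\Omega_{i,k-2}$. Testing \eqref{eqn:relaxed-energy-min} with $v=I^h(\eta^2\psi_j^{(i)})\in V_h$ annihilates the right-hand side and yields
\begin{equation*}
a(\psi_j^{(i)},I^h(\eta^2\psi_j^{(i)}))+\sum_{l=1}^{N}s_l(\pi_l\psi_j^{(i)},\pi_l I^h(\eta^2\psi_j^{(i)}))=0.
\end{equation*}
Next, applying the product rule $\nabla(\eta^2\psi)=\eta^2\nabla\psi+2\eta\psi\nabla\eta$, Young's inequality to absorb the cross term, and the resolution hypothesis \eqref{assume:Th} to control the nonlinearity of $I^h$ element-by-element (exactly as was done in Lemma~\ref{lemma:estimate:difference}), I isolate $\int_\Omega\rho\eta^2|\nabla\psi_j^{(i)}|^2\ge\|\psi_j^{(i)}\|_{a(\Omega\setminus\Omega_{i,k})}^2$ on the left, while the remainders involving $\rho|\nabla\eta|^2(\psi_j^{(i)})^2$ are localized to $\Omega_{i,k}\setminus\Omega_{i,k-2}$ and hence are bounded by the first term on the right of \eqref{bd:star3}. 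For the $\sum_l$ sum, I split the index set according to whether $\Omega_l\subset\Omega_{i,k-2}$ (the term vanishes because $\eta=0$ there), $\Omega_l\subset\Omega\setminus\Omega_{i,k-1}^{\delta}$ (the term equals $\|\pi_l\psi_j^{(i)}\|_{s_l}^2$ and is moved to the left), or $\Omega_l$ intersects the shell (the contribution is then Cauchy--Schwarz-bounded by annular terms). To re-express any raw $s_l$-norms of $\psi_j^{(i)}$ that arise on the right in terms of $\pi_l$-norms and annular $a$-energies, I invoke the now-familiar inequality
\begin{equation*}
\|v\|_{s_l}^2=\|(1-\pi_l)v\|_{s_l}^2+\|\pi_l v\|_{s_l}^2\le\Lambda^{-1}|v|_{a(\Omega_l)}^2+\|\pi_l v\|_{s_l}^2,
\end{equation*}
which produces exactly the factor $1+\Lambda^{-1}$ in \eqref{bd:star3}.

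The main technical obstacle is the geometric bookkeeping: one must verify that $\mathrm{supp}(\nabla\eta)$ really lies inside $\Omega_{i,k}\setminus\Omega_{i,k-2}$ rather than a slightly larger shell, which is what forces the index $k-2$ (instead of $k-1$) on the right of \eqref{bd:star3}; this uses the two-layer buffer built into $\chi_i^{k-1}$ and requires that one subdomain layer dominates the $d$ fine-mesh layers used to pass between $\Omega_{i,\cdot}$ and $\Omega_{i,\cdot}^{\delta\pm}$. A secondary but routine difficulty is controlling $I^h(\eta^2\psi_j^{(i)})-\eta^2\psi_j^{(i)}$; since $\eta$ is piecewise linear with piecewise constant gradient and $\psi_j^{(i)}\in V_h$, this is handled triangle-by-triangle via \eqref{assume:Th}, mirroring the argument already used in the proof of Lemma~\ref{lemma:estimate:difference}.
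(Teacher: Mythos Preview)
Your proposal is correct and follows essentially the same route as the paper: test \eqref{eqn:relaxed-energy-min} with a cutoff built from $1-\chi_i^{k-1}$ so that the source term $s_i(\phi_j^{(i)},\pi_i v)$ vanishes (since $k\ge 2$), split the resulting identity into the exterior region, the shell, and the interior, and close with the spectral inequality $\|v\|_{s_l}^2\le \Lambda^{-1}|v|_{a(\Omega_l)}^2+\|\pi_l v\|_{s_l}^2$. The only technical difference is that the paper tests with $v=I^h\bigl((1-\chi_i^{k-1})\psi_j^{(i)}\bigr)$ rather than your Caccioppoli-style $v=I^h\bigl((1-\chi_i^{k-1})^2\psi_j^{(i)}\bigr)$, replacing your Young-inequality absorption by a direct Cauchy--Schwarz bound on the shell integral; both variants lead to \eqref{bd:star3} with the same geometric bookkeeping you describe.
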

\begin{proof}
We recall the equation in \eqref{eqn:relaxed-energy-min} and choose
$v=I^h((1-\chi_{i}^{k-1}) \psi_j^{(i)})$ in $V_h$ to obtain
\begin{align*}
&a(\psi_j^{(i)}, I^h( (1-\chi_i^{k-1})\psi_j^{(i)}) ) + \sum_l s_l (\pi_l \psi_j^{(i)}, \pi_l (I^h((1-\chi_{i}^{k-1}) \psi_j^{(i)}) ) )\\
&=s_i(\phi_j^{(i)}, \pi_i (I^h((1-\chi_{i}^{k-1}) \psi_j^{(i)}) )).\end{align*}
In the above, the function $\phi_j^{(i)}$ is in $V_h(\Omega_i)$ and thus the right hand side vanishes
when $k-1 \ge 1$, i.e.,
\begin{equation}\label{identity:zero}
a(\psi_j^{(i)}, I^h( (1-\chi_i^{k-1})\psi_j^{(i)}) + \sum_l s_l (\pi_l \psi_j^{(i)}, \pi_l (I^h((1-\chi_{i}^{k-1}) \psi_j^{(i)}) ) )=0
\end{equation}

We now consider
$$a(\psi_j^{(i)}, I^h( (1-\chi_i^{k-1}) \psi_j^{(i)} ) )
=| \psi_j^{(i)} |_{a(\Omega \setminus \Omega_{i,k-1}^\delta)}^2 + \int_{\Omega_{i,k-1}^\delta \setminus
\Omega_{i,k-1}^{\delta-}} \rho \nabla \psi_{j}^{(i)} \cdot \nabla I^h ( (1-\chi_i^{k-1}) \psi_j^{(i)} ) \, dx.$$
For the second term above, we obtain that
\begin{align*}
&\int_{\Omega_{i,k-1}^\delta \setminus\Omega_{i,k-1}^{\delta-}} \rho \nabla \psi_{j}^{(i)} \cdot \nabla I^h ( (1-\chi_i^{k-1}) \psi_j^{(i)} ) \, dx \\
&\le | \psi_j^{(i)} |_{a(\Omega_{i,k-1}^\delta \setminus \Omega_{i,k-1}^{\delta-})}
C_I | (1-\chi_i^{k-1}) \psi_j^{(i)} |_{a(\Omega_{i,k-1}^\delta \setminus \Omega_{i,k-1}^{\delta-})}.
\end{align*}
Combining the two above, we finally obtain that
\begin{align}
&|\psi_j^{(i)} |_{a(\Omega \setminus \Omega_{i,k-1}^\delta)}^2 \nonumber \\
&=a(\psi_j^{(i)}, I^h( (1-\chi_i^{k-1}) \psi_j^{(i)} ) )-\int_{\Omega_{i,k-1}^\delta \setminus
\Omega_{i,k-1}^{\delta-}} \rho \nabla \psi_{j}^{(i)} \cdot \nabla I^h ( (1-\chi_i^{k-1}) \psi_j^{(i)} ) \, dx \nonumber \\
&\le a(\psi_j^{(i)}, I^h( (1-\chi_i^{k-1}) \psi_j^{(i)} ) ) + C_I | \psi_j^{(i)} |_{a(\Omega_{i,k-1}^\delta \setminus \Omega_{i,k-1}^{\delta-})}^2 + C_I \sum_{\Omega_l \subset \Omega_{i,k} \setminus \Omega_{i,k-2}} \| \psi_j^{(i)} \|_{s_l}^2. \label{bd:star1}
\end{align}

We consider
\begin{align*}
\sum_l s_l ( \pi_l \psi_j^{(i)}, \pi_l ( I^h ( (1-\chi_i^{k-1}) \psi_j^{(i)} ) ) )
&\quad =\sum_{\Omega_l \subset \Omega \setminus \Omega_{i,k} } s_l (\pi_l \psi_j^{(i)}, \pi_l \psi_j^{(i)} )\\
&+ \sum_{\Omega_l \subset \Omega_{i,k} \setminus \Omega_{i,k-2} } s_l (\pi_l \psi_j^{(i)}, \pi_l (I^h( (1-\chi_i^{k-1}) \psi_j^{(i)} )) ).
\end{align*}
Using the above identity, we obtain that
\begin{align}
& \sum_{\Omega_l \subset \Omega \setminus \Omega_{i,k} } \| \pi_l \psi_j^{(i)} \|_{s_l}^2 \nonumber \\
& = \sum_l s_l ( \pi_l \psi_j^{(i)}, \pi_l ( I^h ( (1-\chi_i^{k-1}) \psi_j^{(i)} ) ) )
- \sum_{\Omega_l \subset \Omega_{i,k} \setminus \Omega_{i,k-2} } s_l (\pi_l \psi_j^{(i)}, \pi_l (I^h( (1-\chi_i^{k-1}) \psi_j^{(i)} )) ) \nonumber \\
& \le \sum_l s_l ( \pi_l \psi_j^{(i)}, \pi_l ( I^h ( (1-\chi_i^{k-1}) \psi_j^{(i)} ) ) ) \nonumber \\
& + C_I \sum_{\Omega_l \subset \Omega_{i,k} \setminus \Omega_{i,k-2}} \| \pi_l \psi_j^{(i)} \|_{s_l}^2
+ C_I \Lambda^{-1} | \psi_j^{(i)} |_{a(\Omega_{i,k}\setminus \Omega_{i,k-2})}^2. \label{bd:star2}
\end{align}

From \eqref{bd:star1} and \eqref{bd:star2} with \eqref{identity:zero}, we finally obtain
\begin{align*}
& | \psi_j^{(i)} |_{a(\Omega \setminus \Omega_{i,k-1}^{\delta})}^2
+ \sum_{\Omega_l \subset \Omega \setminus \Omega_{i,k}} \| \pi_l \psi_j^{(i)} \|_{s_l}^2 \\
& \le C ( 1+\Lambda^{-1} ) \left( | \psi_j^{(i)} |_{a(\Omega_{i,k}\setminus \Omega_{i,k-2})}^2
+ \sum_{\Omega_l \subset \Omega_{i,k} \setminus \Omega_{i,k-2} } \| \pi_l \psi_j \|_{s_l}^2\right)
\end{align*}
and thus the resulting estimate.
\end{proof}

Using the estimate in Lemma~\ref{lemma:kcbyk2}, we can obtain the following
$$|\psi_j^{(i)}|_{a(\Omega \setminus \Omega_{i,k-2})}^2
+ \sum_{\Omega_l \subset \Omega \setminus \Omega_{i,k-2}} \| \pi_l \psi_j^{(i)} \|_{s_l}^2
\ge E \left( | \psi_j^{(i)} |_{a(\Omega \setminus \Omega_{i,k})}^2
+ \sum_{\Omega_l \subset \Omega \setminus \Omega_{i,k} } \| \pi_l \psi_j^{(i)} \|_{s_l}^2 \right),$$
where $E=(1+ (C(1+\Lambda^{-1}))^{-1} )>1$.
Applying the above estimate recursively, we obtain the following exponential decay property for $\psi_j^{(i)}$:
\begin{lemma}\label{lemma:estimate:decay}
For $k=2m$ with $m\ge1$, we obtain
$$ | \psi_j^{(i)} |_{a(\Omega \setminus \Omega_{i,k})}^2
+ \sum_{\Omega_l \subset \Omega \setminus \Omega_{i,k}} \| \pi_l \psi_j^{(i)} \|_{s_l}^2
\le E^{-m} \left( | \psi_j^{(i)} |_{a(\Omega)}^2 + \sum_l \| \pi_l \psi_j^{(i)} \|_{s_l}^2 \right),$$
where $E=(1+(C(1+\Lambda^{-1}))^{-1})>1$.
\end{lemma}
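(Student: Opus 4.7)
The proof is essentially a clean iteration of Lemma~\ref{lemma:kcbyk2}. The plan is to introduce an abbreviation for the tail energy and read Lemma~\ref{lemma:kcbyk2} as a contraction between two consecutive tails.

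First I would set
\[
A_k := |\psi_j^{(i)}|_{a(\Omega\setminus\Omega_{i,k})}^2 + \sum_{\Omega_l\subset\Omega\setminus\Omega_{i,k}}\|\pi_l\psi_j^{(i)}\|_{s_l}^2,\qquad k\ge 0,
\]
so that the quantity we want to bound is $A_k$ and the desired right-hand side is (up to containment) $A_0$. Because $\Omega\setminus\Omega_{i,k-2}$ is the disjoint union of $\Omega_{i,k}\setminus\Omega_{i,k-2}$ and $\Omega\setminus\Omega_{i,k}$, and since each overlapping subdomain index $l$ is counted on the side in which $\Omega_l$ lies, we have the additive identity
\[
A_{k-2} = A_k + \Bigl(|\psi_j^{(i)}|_{a(\Omega_{i,k}\setminus\Omega_{i,k-2})}^2 + \sum_{\Omega_l\subset\Omega_{i,k}\setminus\Omega_{i,k-2}}\|\pi_l\psi_j^{(i)}\|_{s_l}^2\Bigr).
\]

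Next I would substitute Lemma~\ref{lemma:kcbyk2}, which is precisely the statement that $A_k \le C(1+\Lambda^{-1})$ times the parenthesized shell term above. Rearranging gives
\[
A_k \le C(1+\Lambda^{-1})\,(A_{k-2}-A_k),
\]
so that
\[
\Bigl(1+\tfrac{1}{C(1+\Lambda^{-1})}\Bigr)A_k \le A_{k-2},\qquad\text{i.e.,}\qquad A_k \le E^{-1}A_{k-2},
\]
with $E=1+(C(1+\Lambda^{-1}))^{-1}>1$. This is the one-step contraction that drives the decay.

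Finally I would iterate: for $k=2m$, applying the one-step bound $m$ times yields $A_{2m}\le E^{-m}A_0$. Since $\Omega_{i,0}\subset\Omega$ (taking zero layers of extension gives $\Omega_{i,0}=\Omega_i$), the tail $A_0$ is dominated by the full-domain quantity
\[
A_0 \le |\psi_j^{(i)}|_{a(\Omega)}^2 + \sum_l\|\pi_l\psi_j^{(i)}\|_{s_l}^2,
\]
which is exactly the right-hand side of the lemma. I do not foresee a real obstacle: the only thing to be careful about is the partition of the $s_l$ terms in the additive identity (each $\Omega_l$ belongs to exactly one of the two regions), and the check that the base case $k=2$ already sits inside the regime $k-1\ge 1$ required for the identity~\eqref{identity:zero} used in the proof of Lemma~\ref{lemma:kcbyk2}.
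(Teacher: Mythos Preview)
Your proposal is correct and is essentially identical to the paper's own argument: the paper also reads Lemma~\ref{lemma:kcbyk2} as a bound of the tail $A_k$ by the shell $A_{k-2}-A_k$, rearranges to $A_{k-2}\ge E\,A_k$ with $E=1+(C(1+\Lambda^{-1}))^{-1}$, and iterates. Your explicit remark that the $s_l$-sums split cleanly because each non-overlapping subdomain $\Omega_l$ lies entirely in one of the two regions (since $\Omega_{i,k}$ is a union of such subdomains) is the only point the paper leaves implicit.
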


We recall the main estimate in \eqref{estimate:main} and let
$$w_i=\sum_{j=1}^{l_i} c_{ij} (\psi_j^{(i)}-\Psi_{j,ms}^{(i)}),\quad w(=u_0-u_{ms})=\sum_{i=1}^N w_i.$$
We then obtain the following result:
\begin{lemma}\label{global:to:local}
For $w=\sum_i w_i$ with $w_i=\sum_{j=1}^{l_i} c_{ij} ( \psi_j^{(i)}-\Psi_{j,ms}^{(i)} )$,
$$a(w,w)+\sum_l s_l (\pi_l w, \pi_l w)
\le C_I (1+\Lambda^{-1}) k^{r} \sum_i \left(a(w_i,w_i)+ \sum_l s_l (\pi_l w_i, \pi_l w_i) \right),$$
where $r$ denotes the dimension of the domain $\Omega$, i.e., $\Omega \subset \mathbb{R}^r$.
\end{lemma}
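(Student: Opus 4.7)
The target inequality compares a global energy-type quantity to a sum of local ones, with the $k^r$ factor indicating a coloring constant. Since $\widetilde{\Omega}_i=\Omega_{i,k}^{\delta}$ occupies an $O(k)$-diameter ball of coarse subdomains, any point of $\Omega$ lies in at most $O(k^r)$ such oversampling regions. The plan is to obtain the $k^r$ factor from a finite-overlap Cauchy--Schwarz argument over these regions, while using the exponential decay of $\psi_j^{(i)}$ away from $\Omega_i$ (Lemma~\ref{lemma:estimate:decay}) and the Galerkin orthogonality from Lemma~\ref{lemma:min:property} to control the genuinely global contributions coming from $\psi_j^{(i)}$.

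Writing $\mathcal{E}(u,v):=a(u,v)+\sum_l s_l(\pi_l u,\pi_l v)$, I would expand $\mathcal{E}(w,w)=\sum_{i,i'}\mathcal{E}(w_i,w_{i'})$ and split pairs $(i,i')$ into \emph{close} pairs (those with $\widetilde{\Omega}_i\cap\widetilde{\Omega}_{i'}\neq\emptyset$) and \emph{far} pairs. Since each $i$ has at most $O(k^r)$ close partners, combining Cauchy--Schwarz with the arithmetic--geometric mean inequality gives
$$\sum_{(i,i')\text{ close}}\mathcal{E}(w_i,w_{i'})\le C\,k^r\sum_i \mathcal{E}(w_i,w_i),$$
which accounts for the factor $k^r$. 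The constants $C_I$ and $(1+\Lambda^{-1})$ enter when, inside each close-pair cross term, I bound integrals over $\widetilde{\Omega}_i$ by the natural $\mathcal{E}(w_i,w_i)$-norms using the nodal-interpolant stability and the eigenvalue bound $s_l(v,v)\le \Lambda^{-1}a_l(v,v)$ on the $V_{glb}^{\perp}$-part of $w_i$, exactly as in the manipulations leading to \eqref{star1}--\eqref{star2}.

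The main obstacle is that $w_i$ is globally supported through $\psi_j^{(i)}$, so the far pairs are not automatically zero. To handle them, I would split each $w_{i'}$ via a cut-off adapted to $\widetilde{\Omega}_i$: the piece living in $\widetilde{V}_i$ is annihilated by the orthogonality $\mathcal{E}(\psi_j^{(i)}-\Psi_{j,ms}^{(i)},v)=0$ established in the proof of Lemma~\ref{lemma:min:property}, while the complementary piece pairs with $w_i$ across a region where Lemma~\ref{lemma:estimate:decay} gives exponential smallness in the distance. Choosing $k$ moderate makes this residual geometrically absorbable into the close-pair bound. The hard part is executing this decomposition cleanly without losing factors of $N$; the exponential decay is what makes the trade-off between the $k^r$ coloring cost and the far-pair tail estimate work out, producing the stated bound $C_I(1+\Lambda^{-1})k^r$.
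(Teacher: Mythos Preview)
Your approach diverges from the paper's in a way that leaves a real gap. The paper does \emph{not} expand $\mathcal{E}(w,w)$ as a double sum over pairs $(i,i')$, and it does \emph{not} invoke the exponential decay Lemma~\ref{lemma:estimate:decay} at all in this proof. Instead it writes $\mathcal{E}(w,w)=\sum_i \mathcal{E}(w_i,w)$ and then, for each fixed $i$, splits the test function $w$ via the telescoping cut-off
\[
w = I^h\!\big((1-\chi_i^{k+1})w\big) + I^h\!\big((\chi_i^{k+1}-\chi_i^{k})w\big) + I^h\!\big(\chi_i^{k}w\big).
\]
The inner piece $I^h(\chi_i^k w)\in\widetilde V_i$ is killed by the Galerkin orthogonality of Lemma~\ref{lemma:min:property}, which you identified. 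The point you miss is that the \emph{outer} piece $I^h((1-\chi_i^{k+1})w)$ is also annihilated exactly: $\Psi_{j,ms}^{(i)}$ has disjoint support from it, and $\psi_j^{(i)}$ satisfies the global equation \eqref{eqn:relaxed-energy-min} whose right-hand side $s_i(\phi_j^{(i)},\pi_i v)$ vanishes because $v$ is zero on $\Omega_i$. Hence only the annular term survives, and the $k^r$ factor then arises from the bounded overlap of the annuli $\Omega_{i,k+2}\setminus\Omega_{i,k-1}$, together with the $(1+\Lambda^{-1})$ and $C_I$ coming from estimating $(\chi_i^{k+1}-\chi_i^k)w$ in the $s_l$- and $a$-norms.

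Your plan, by contrast, tries to control the far-pair interactions using exponential decay. You yourself flag that ``the hard part is executing this decomposition cleanly without losing factors of $N$'', and that difficulty is genuine: once you write the far contribution as $\sum_{(i,i')\text{ far}}\mathcal{E}(w_i,w_{i'})$, each term reduces (after the orthogonalities you can see) to quantities like $s_{i'}(\Phi_{i'},\pi_{i'}\psi_j^{(i)})$, and summing these over all $i'$ while staying independent of $N$ and landing on the precise constant $C_I(1+\Lambda^{-1})k^r$ is not something your sketch accomplishes. The second orthogonality is exactly the missing idea that removes this obstacle entirely.
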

\begin{proof}
We consider
\begin{align}
& a(w,w)+\sum_l s_l (\pi_l w, \pi_l w) \nonumber \\
&= \sum_{i} \left( a(w_i,w) + \sum_l s_l (\pi_l w_i, \pi_l w) \right) \nonumber \\
&= \sum_{i} \left( a(w_i,I^h((1-\chi_i^{k+1}+\chi_{i}^{k+1}-\chi_{i}^{k}+\chi_{i}^{k})w))) \right. \nonumber \\
& \quad \quad \quad  \left. + \sum_l s_l ( \pi_l w_i, \pi_l I^h((1-\chi_i^{k+1}+\chi_i^{k+1}-\chi_{i}^{k}+\chi_{i}^{k})w)  ) \right)
\nonumber \\
&= \sum_{i} \left( a(w_i, I^h( (\chi_{i}^{k+1}-\chi_i^k)w))+\sum_l s_l (\pi_l w_i, \pi_l ( I^h( (\chi_i^{k+1}-\chi_i^k) w ) ) ) \right),
\label{eqn:wi}
\end{align}
where we have used that
$$a(w_i,I^h((1-\chi_i^{k+1})w))+\sum_l s_l(\pi_l w_i, \pi_l (I^h((1-\chi_i^{k+1})w)))=0$$
and
$$a(w_i,I^h( \chi_i^{k}w ))+\sum_l s_l(\pi_l w_i, \pi_l (I^h( \chi_i^{k}w)))=0.$$

For the first term in \eqref{eqn:wi}, we obtain that
\begin{align}
a(w_i,I^h( (\chi_i^{k+1}-\chi_i^k) w )) &\le |w_i|_{a(\Omega)} |I^h((\chi_i^{k+1}-\chi_i^k)w)|_{a(\Omega)} \nonumber\\
&\le |w_i|_{a(\Omega)} C_I^{1/2} |(\chi_i^{k+1}-\chi_i^k)w|_{a(\Omega)}. \nonumber
\end{align}
For the second term in \eqref{eqn:wi}, we obtain that
\begin{align}
& \sum_l s_l (\pi_l w_i, \pi_l ( I^h( (\chi_i^{k+1}-\chi_i^k) w ) ) ) \nonumber \\
& \le \left( \sum_l \| \pi_l w_i \|_{s_l}^2 \right)^{1/2} \left( \sum_l \| \pi_l( I^h((\chi_i^{k+1}-\chi_i^k)w) ) \|_{s_l}^2  \right)^{1/2} \nonumber \\
& \le \left( \sum_l \| \pi_l w_i \|_{s_l}^2 \right)^{1/2} \left( C_I \sum_l \|  (\chi_i^{k+1}-\chi_i^k)w  \|_{s_l}^2 \right)^{1/2}. \nonumber
\end{align}
Combining \eqref{eqn:wi} with the two estimates above and using Cauchy-Schwarz inequality,
\begin{align*}
& a(w,w)+\sum_l s_l (\pi_l w, \pi_l w) \\
& \le C_I^{1/2} \left( \sum_i \left( |w_i|_{a(\Omega)}^2 + \sum_l \| \pi_l w_i \|_{s_l}^2 \right) \right)^{1/2} \\
& \quad \quad \quad \quad \left( \sum_i \left( | (\chi_i^{k+1}-\chi_i^k)w |_{a(\Omega)}^2 + \sum_l \| (\chi_i^{k+1}-\chi_i^k) w \|_{s_l}^2 \right) \right)^{1/2} \\
& \le C_I^{1/2} \left( \sum_i \left( |w_i|_{a(\Omega)}^2 + \sum_l \| \pi_l w_i \|_{s_l}^2 \right) \right)^{1/2} \\
& \quad \quad \quad \quad \left( \sum_i \left( (1+\Lambda^{-1}) | w |_{a(\Omega_{i,k+2}\setminus\Omega_{i,k-1})}^2  + \sum_{\Omega_l \in \Omega_{i,k+2}\setminus \Omega_{i,k-1}} \| \pi_l w  \|_{s_l}^2 \right) \right)^{1/2}\\
& \le C_I^{1/2} k^{r/2} (1+\Lambda^{-1})^{1/2} \left( \sum_i \left( |w_i|_{a(\Omega)}^2 + \sum_l \| \pi_l w_i \|_{s_l}^2 \right) \right)^{1/2} \left( |w|_{a(\Omega)}^2 +\sum_l \| \pi_l w \|_{s_l}^2  \right)^{1/2},
\end{align*}
where $r$ denotes the dimension of the model problem, i.e., $\Omega \subset \mathbb{R}^r$.
\end{proof}

We note that the estimate in Lemma~\ref{lemma:estimate:difference} holds for $w_i$ and using the estimate combined with
Lemma~\ref{lemma:estimate:decay},
$$ a(w_i,w_i) + \sum_l s_l (\pi_l w_i, \pi_l w_i)
\le C_I (1+\Lambda^{-1}) E^{-m} \left( a(u_{0,i},u_{0,i}) +  \sum_l s_l (\pi_l u_{0,i}, \pi_l u_{0,i} ) \right),$$
where $u_{0,i}=\sum_{j=1}^{l_i} c_{ij} \psi_j^{(i)}$ and $k=2m+1$.
Combining this with Lemma~\ref{global:to:local}, we obtain the following key estimate:
\begin{lemma}\label{lemma:key:estimate}
For $w=\sum_i \sum_{j=1}^{l_i} c_{ij} ( \psi_j^{(i)}-\psi_{j,ms}^{(i)} )$ and $k=2m+1$ with $m \ge 1$,
we obtain that
$$a(w,w)+\sum_l s_l (\pi_l w, \pi_l w)
\le C_I^2 (1+\Lambda^{-1})^2 k^{r} E^{-m} \sum_i \left(  a(u_{0,i},u_{0,i}) +  \sum_l s_l (\pi_l u_{0,i}, \pi_l u_{0,i} ) \right),$$
where $E$ is bigger than 1 and depends on $\Lambda$.
\end{lemma}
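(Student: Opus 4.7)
The plan is to assemble the result by chaining together Lemma~\ref{global:to:local}, Lemma~\ref{lemma:estimate:difference}, and Lemma~\ref{lemma:estimate:decay} in order. Lemma~\ref{global:to:local} already reduces $a(w,w)+\sum_l s_l(\pi_l w,\pi_l w)$ to a sum of local contributions from the pieces $w_i=\sum_{j=1}^{l_i} c_{ij}(\psi_j^{(i)}-\Psi_{j,ms}^{(i)})$, up to a combinatorial factor $k^r$ and a factor $(1+\Lambda^{-1})$. So the main task is to control each local contribution $a(w_i,w_i)+\sum_l s_l(\pi_l w_i,\pi_l w_i)$ by the quantity $a(u_{0,i},u_{0,i})+\sum_l s_l(\pi_l u_{0,i},\pi_l u_{0,i})$ with the desired exponential decay factor $E^{-m}$.

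To bound each $w_i$, I would first observe that Lemma~\ref{lemma:estimate:difference}, although stated for a single basis function $\psi_j^{(i)}-\Psi_{j,ms}^{(i)}$, extends to the linear combination $w_i$ by repeating its proof with $u_{0,i}=\sum_j c_{ij}\psi_j^{(i)}$ in place of $\psi_j^{(i)}$; the underlying ingredient is the minimization property in Lemma~\ref{lemma:min:property}, which applies as soon as one chooses the test function $v=I^h(\chi_i^k u_{0,i})\in\widetilde{V}_i$. This yields the bound
$$a(w_i,w_i)+\sum_l s_l(\pi_l w_i,\pi_l w_i)\le C_I\Bigl((1+\Lambda^{-1})\,|u_{0,i}|_{a(\Omega\setminus\Omega_{i,k-1})}^2+\sum_{\Omega_l\subset\Omega\setminus\Omega_{i,k-1}}\|\pi_l u_{0,i}\|_{s_l}^2\Bigr).$$
Next I would apply Lemma~\ref{lemma:estimate:decay} to $u_{0,i}$ in place of $\psi_j^{(i)}$; the recursion in Lemma~\ref{lemma:kcbyk2} still produces the vanishing identity \eqref{identity:zero} because all auxiliary functions $\phi_j^{(i)}$ appearing in the right-hand sides are supported in the same $\Omega_i$, so the source terms vanish on every cutoff test function $I^h((1-\chi_i^{k-1})u_{0,i})$ whenever $k-1\ge 1$. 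This produces the exponential factor $E^{-m}$ with $k=2m+1$.

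Substituting the resulting per-$i$ estimate back into Lemma~\ref{global:to:local} and summing in $i$ then gives the inequality directly. Multiplying the factor $(1+\Lambda^{-1})$ coming from Lemma~\ref{global:to:local} with the factor $(1+\Lambda^{-1})$ coming from the combination of Lemma~\ref{lemma:estimate:difference} and Lemma~\ref{lemma:estimate:decay} yields the $(1+\Lambda^{-1})^2$ in the statement; the two $C_I$'s from the nodal-interpolant estimates combine to $C_I^2$; the polynomial factor $k^r$ survives from the global-to-local coloring count; and the exponential $E^{-m}$ is carried over unchanged from the decay step.

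The main obstacle I expect is the transfer of Lemmas~\ref{lemma:estimate:difference} and~\ref{lemma:estimate:decay} from a single basis function $\psi_j^{(i)}$ to the linear combination $u_{0,i}=\sum_j c_{ij}\psi_j^{(i)}$. One has to check that the cutoff test function $I^h(\chi_i^k u_{0,i})$ still lies in $\widetilde{V}_i$ and that the key vanishing identity built on $\phi_j^{(i)}\in V_h(\Omega_i)$ is preserved under summation over $j$ --- which it is, precisely because the index $i$ is fixed. Once this is verified, the remaining work is purely bookkeeping of constants, Cauchy--Schwarz applications, and the proper interaction between the two sums $\sum_i$ and $\sum_l$.
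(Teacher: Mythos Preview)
Your proposal is correct and follows essentially the same route as the paper: apply Lemma~\ref{global:to:local} to pass from $w$ to the local pieces $w_i$, then use the extension of Lemma~\ref{lemma:estimate:difference} (with $u_{0,i}$ in place of a single $\psi_j^{(i)}$) together with the decay estimate of Lemma~\ref{lemma:estimate:decay} (applied at level $k-1=2m$) to bound each $w_i$ by $u_{0,i}$, and multiply the resulting constants. The paper simply asserts that Lemma~\ref{lemma:estimate:difference} ``holds for $w_i$'' and combines it with Lemma~\ref{lemma:estimate:decay} without further comment, whereas you correctly identify and justify why the fixed-$i$ linear combinations inherit the minimization and vanishing properties needed for both lemmas.
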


We will now work on obtaining the following estimate
$$\sum_{i=1}^N \left( a(u_{0,i},u_{0,i}) +  \sum_l s_l (\pi_l u_{0,i}, \pi_l u_{0,i} )  \right)
\le D \left( |u_0|_{a(\Omega)}^2 + \sum_l \| \pi_l u_0 \|_{s_l}^2 \right),$$
where $u_0=\sum_i u_{0,i}$ and $D$ denotes a constant related to the function $z$ introduced below after \eqref{eqn:u0}.

We let $$\Phi_i:=\sum_{j=1}^{l_i} c_{ij} \phi_j^{(i)},$$
and we can then observe that
\begin{equation}\label{eqn:u0}
a(u_0,v)+\sum_l s_l(\pi_l u_0, \pi_l v)=\sum_{i} s_i(\Phi_i, \pi_i v),\quad \forall v \in V_h.
\end{equation}
We can choose $z$ in $V_h$ (Lemma 2 in \cite{CEL2018})
such that
$$\pi_i z = \Phi_i,\quad \forall i=1,\cdots,N,\quad |z|_{a(\Omega)}^2 + \sum_i \|\pi_i z \|_{s_i}^2
\le D \sum_i \| \Phi_i \|_{s_i}^2.$$
Choosing $v=z$ in \eqref{eqn:u0} and using Cauchy-Schwarz inequality, we obtain that
$$\sum_i \| \Phi_i \|_{s_i}^2 \le D \left( |u_0|_{a(\Omega)}^2 + \sum_l \| \pi_l u_0 \|_{s_l}^2 \right).$$

We introduce the constant $C_p$ satisfying
$$\frac{ \max_{l} ( \max_{\tau \in \Omega_l \setminus \Omega_l^{INT}}
( \min_{x \in \tau}( \rho(x)  \sum_{j \in n(l)} |\nabla \theta_j (x)|^2 ) ) ) } { \min_{\tau \in \mathcal{T}_h} (\max_{x \in \tau} \rho(x)) } \le C_p,$$
where $\Omega_l^{INT}$ denotes the union of triangles only belonging to $\Omega_l$, i.e., $\theta_l(x)=1$ for all $x$
in $\Omega_l^{INT}$.
We then finally obtain
\begin{equation}\label{bd:Phii}
\sum_i \| \Phi_i \|_{s_i}^2 \le D (1+C_p) | u_0 |_{a(\Omega)}^2,
\end{equation}
where we have used the Poincar\'{e} inequality
$$\int_{\Omega} u_0^2  \, dx \le C \int_{\Omega} |\nabla u_0|^2 \, dx,$$
the assumption~\eqref{assume:Th} for the triangulation $\mathcal{T}_h$,
and the property for the partition of unity functions, i.e.,  $\sum_{j \in n(l)} |\nabla \theta_j(x) |^2$
is constant in each $\tau$ in $\mathcal{T}_h$. We note that $n(l)$ denotes the set of overlapping subdomain indices $j$,
such that $\Omega_l$ intersects with $\Omega_j^\prime$.

We now observe that
$$
a(u_{0,i},u_{0,i}) +  \sum_l s_l (\pi_l u_{0,i}, \pi_l u_{0,i} ) = s_i(\Phi_i, \pi_i u_{0,i}).$$
From the above identity and then applying Cauchy-Schwarz inequality, we obtain
$$a(u_{0,i},u_{0,i}) + \sum_l s_l (\pi_l u_{0,i}, \pi_l u_{0,i}) \le s_i(\Phi_i,\Phi_i).$$
Taking summation on $i$ and using the estimate in \eqref{bd:Phii},
\begin{align}
& \sum_i \left(  | u_{0,i} |_{a(\Omega)}^2 + \sum_l \| \pi_l u_{0,i} \|_{s_l}^2 \right) \nonumber \\
& \le \sum_i \| \Phi_i \|_{s_i}^2 \nonumber \\
& \le D (1+C_p) | u_0 |_{a(\Omega)}^2. \label{bd:sum:u0i}
\end{align}
Combining Lemma~\ref{lemma:key:estimate} with \eqref{bd:sum:u0i}, we finally obtain:
\begin{lemma}\label{lemma:ket:etimate:result}
For $w=\sum_i \sum_{j=1}^{l_i} c_{ij} ( \psi_j^{(i)}-\psi_{j,ms}^{(i)} )$ and $k=2m+1$ with $m \ge 1$,
we obtain
$$a(w,w)+\sum_l s_l (\pi_l w, \pi_l w)
\le C_I^2 (1+\Lambda^{-1})^2 k^{r} E^{-m} D (1+C_p) a(u_0,u_0),$$
where $E$ is bigger than 1 and depends on $\Lambda$.
\end{lemma}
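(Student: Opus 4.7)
The plan is to combine two estimates that have already been established: Lemma~\ref{lemma:key:estimate} and the auxiliary bound~\eqref{bd:sum:u0i}. First I would apply Lemma~\ref{lemma:key:estimate} to the given $w=\sum_i\sum_{j=1}^{l_i} c_{ij}(\psi_j^{(i)}-\Psi_{j,ms}^{(i)})$, which controls $a(w,w)+\sum_l s_l(\pi_l w,\pi_l w)$ by the factor $C_I^2(1+\Lambda^{-1})^2 k^r E^{-m}$ times the local sum $\sum_i \bigl( a(u_{0,i},u_{0,i}) + \sum_l s_l(\pi_l u_{0,i},\pi_l u_{0,i}) \bigr)$, where $u_{0,i}=\sum_{j=1}^{l_i} c_{ij}\psi_j^{(i)}$. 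The crucial bookkeeping point is that the coefficients $c_{ij}$ appearing in $w_i$ are exactly the coefficients in the expansion of $u_0$ (and hence of $u_{ms}$ via \eqref{def:ums}), so that $u_0=\sum_i u_{0,i}$ really is a decomposition of the coarse function whose energy will appear on the right-hand side.

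Next I would plug in \eqref{bd:sum:u0i}, which bounds this local sum by $D(1+C_p)|u_0|_{a(\Omega)}^2=D(1+C_p) a(u_0,u_0)$. Multiplying the two constants yields
$$a(w,w)+\sum_l s_l(\pi_l w,\pi_l w) \le C_I^2(1+\Lambda^{-1})^2 k^r E^{-m} D(1+C_p)\, a(u_0,u_0),$$
which is exactly the stated inequality. The parameter $E>1$ depending on $\Lambda$ is inherited verbatim from Lemma~\ref{lemma:estimate:decay} through Lemma~\ref{lemma:key:estimate}, and no further manipulation of $E$ is required.

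The real substance has already been carried out in the preceding lemmas: the exponential factor $E^{-m}$ comes from the recursive decay in Lemma~\ref{lemma:estimate:decay}; the polynomial factor $k^r$ arises in Lemma~\ref{global:to:local} from counting how many annular cutoff layers can simultaneously touch a fixed point in $\mathbb{R}^r$; and the constant $D(1+C_p)$ comes from the Poincar\'e/partition-of-unity argument that converts information on the auxiliary functions $\Phi_i$ into energy information on the global coarse function $u_0$. Consequently, I do not expect any genuine obstacle in establishing Lemma~\ref{lemma:ket:etimate:result} itself: the only thing to check is the matching of coefficients between the decomposition of $w$ and the decomposition of $u_0$, which is immediate from the definitions. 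Once that matching is noted, chaining the two inequalities is mechanical and produces the stated bound.
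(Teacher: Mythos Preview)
Your proposal is correct and matches the paper's approach exactly: the paper states the lemma immediately after writing ``Combining Lemma~\ref{lemma:key:estimate} with \eqref{bd:sum:u0i}, we finally obtain,'' which is precisely the two-step chaining you describe. There is nothing further to add.
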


From the above Lemma we can obtain the main estimate in \eqref{estimate:main} with the constant $C_0^2$
to be
$$C_0^2=C_I^2 (1+\Lambda^{-1})^2 k^{r} E^{-m} D (1+C_p).$$
We note that by choosing sufficiently large $k=2m+1$, one can control the constant $C_p$
to obtain the resulting bound on $C_0$ robust to both $\rho(x)$ and $\delta$, the overlapping
width in the partition.

\begin{theorem}\label{thm:stable:decomposition}
For a sufficiently large $k=2m+1$, depending on $\rho(x)$ and the partition of unity functions, $\theta_l(x)$,
one can obtain the following bound for $u_{ms}$ in \eqref{def:ums} and $u_i$ in \eqref{def:ui},
$$\sum_i a(u_i,u_i) + a(u_{ms},u_{ms}) \le C_0^2 a(u,u),$$
where the constant $C_0$ depends on $\Lambda$ but does not depend on $\rho(x)$ and the overlapping width $\delta$ in the subdomain partition.
\end{theorem}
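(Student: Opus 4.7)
The plan is to assemble the estimates established in Lemmas~\ref{lemma:stable:decomposition:Vglb}--\ref{lemma:ket:etimate:result} and reduce the theorem to the single inequality \eqref{estimate:main}, which the bulk of this section was designed to handle. The decomposition itself is fixed by \eqref{def:ums} and \eqref{def:ui}, so the only work is a norm inequality.

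First I would recall the chain of reductions already carried out between \eqref{coarse:term} and \eqref{estimate:main}: the bound for $a(u_{ms},u_{ms})$ is split by triangle inequality into $a(u,u)$, $a(u-u_0,u-u_0)$, and $a(u_0-u_{ms},u_0-u_{ms})$, and the first two are controlled by $a(u,u)$ using the Galerkin orthogonality $a(u-u_0,v)=0$ for $v\in V_{glb}$; similarly, bounding $\sum_i a(u_i,u_i)$ via Cauchy--Schwarz, the product rule for $\nabla(\theta_i\cdot)$, the $s_l$-structure of the partition of unity, and the spectral inequality $s_i(u-u_0,u-u_0)\le \Lambda^{-1}a_i(u-u_0,u-u_0)$ (available because $u-u_0\in\widetilde V$) leaves the residual term $a(u_0-u_{ms},u_0-u_{ms})+\sum_i s_i(u_0-u_{ms},u_0-u_{ms})$. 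This is exactly the left hand side of \eqref{estimate:main}.

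Next I would apply Lemma~\ref{lemma:ket:etimate:result} to $w=u_0-u_{ms}=\sum_i\sum_j c_{ij}(\psi_j^{(i)}-\Psi_{j,ms}^{(i)})$ to bound $a(w,w)+\sum_l s_l(\pi_l w,\pi_l w)$ by $C_I^2(1+\Lambda^{-1})^2 k^{r}E^{-m}D(1+C_p)\,a(u_0,u_0)$, and then use $a(u_0,u_0)\le a(u,u)$ (Galerkin orthogonality again). The only mismatch between this bound and \eqref{estimate:main} is that the lemma produces the projected norm $\sum_l\|\pi_l w\|_{s_l}^2$, while \eqref{estimate:main} asks for the full norm $\sum_i s_i(w,w)$. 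This is handled by the splitting $\|v\|_{s_l}^2=\|(1-\pi_l)v\|_{s_l}^2+\|\pi_l v\|_{s_l}^2\le\Lambda^{-1}|v|_{a(\Omega_l)}^2+\|\pi_l v\|_{s_l}^2$ used throughout this section, so the full norm is absorbed into the same $C_0^2$ up to a $(1+\Lambda^{-1})$ factor.

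The only remaining issue, which is the main obstacle, is to make the composite constant $C_0^2=C_I^2(1+\Lambda^{-1})^2k^rE^{-m}D(1+C_p)$ robust in $\rho$ and $\delta$. The factors $C_I$, $\Lambda^{-1}$, $D$ and $E$ depend only on $\Lambda$, but $C_p$ encodes both the contrast and the overlapping width through $|\nabla\theta_l|$. Since $E>1$ depends only on $\Lambda$, the exponential decay factor $E^{-m}$ defeats the polynomial growth $k^r$ and the multiplicative constant $(1+C_p)$: choosing $m$ (hence $k=2m+1$) so that $k^rE^{-m}(1+C_p)$ is of order one yields a bound on $C_0^2$ depending only on $\Lambda$. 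Combined with the reduction above, this gives $\sum_i a(u_i,u_i)+a(u_{ms},u_{ms})\le C_0^2 a(u,u)$ with $C_0$ independent of $\rho(x)$ and $\delta$, as claimed.
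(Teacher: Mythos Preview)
Your proposal is correct and follows essentially the same route as the paper: reduce the stable decomposition to \eqref{estimate:main}, invoke Lemma~\ref{lemma:ket:etimate:result} together with $a(u_0,u_0)\le a(u,u)$, and then use the exponential factor $E^{-m}$ to absorb both the polynomial $k^r$ and the contrast-dependent constant $(1+C_p)$ by taking $m$ large enough. You even make explicit the passage from $\sum_l\|\pi_l w\|_{s_l}^2$ to $\sum_l s_l(w,w)$ via $\|v\|_{s_l}^2\le\Lambda^{-1}|v|_{a(\Omega_l)}^2+\|\pi_l v\|_{s_l}^2$, a detail the paper leaves implicit.
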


\section{Numerical experiment}\label{sec:numerics}

In our numerical experiments, we form the nonoverlapping subdomain partitions and overlapping subdomain partitions
as follows.
We partition the unit square domain into $n\times n$ uniform squares
to obtain a nonoverlapping subdomain partition.
We use the notation $\Omega_i$ for each of them.
Each nonoverlapping subdomain is divided into uniform triangles by using $m \times m$ uniform squares
and then dividing each square into two triangles.
We define $\Omega_{i,d,h}$ (see Fig. \ref{fig1:subfig:a}) by enlarging $\Omega_i$ by $d$ fine grid layers and let $\Omega_i^\prime:=\Omega_{i,d,h}$. We note that $\{ \Omega_i^\prime \}_i$ is an overlapping subdomain partition of $\Omega$, and will be used when forming local problems in the preconditioner.
In addition, we define $\Omega_{i,k,H}^\delta$ (see Fig. \ref{fig1:subfig:b}) by enlarging $\Omega_i$ by $k$ layers of neighboring subdomains and then by $d$ fine layers, i.e. $\delta=dh$.
We let $\widetilde{\Omega}_i:=\Omega_{i,k,H}^\delta$.
We note that the relaxed constrained minimization problem for finding $\psi_{j,ms}^{(i)}$
is solved in the smaller region in $\widetilde{\Omega}_i$ rather than in the whole domain $\Omega$,
which greatly helps to reduce the computational cost.
We remark that the basis functions $\psi_{j,ms}^{(i)}$ are used in the global coarse problem of our preconditioner.

\begin{figure}[t]
\centering
\subfigure[The red domain is $\Omega_{i,d,h}$ with $d=1$.]{ \label{fig1:subfig:a} 
\includegraphics[width=2.2in]{./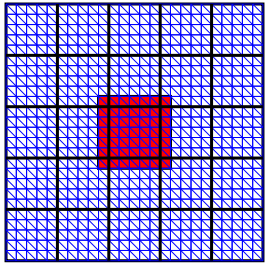}}
\hspace{1in}
\subfigure[The green domain is $\Omega_{i,k,H}^\delta$ with $k=1$ and $\delta=h$.]{ \label{fig1:subfig:b} 
\includegraphics[width=2.2in]{./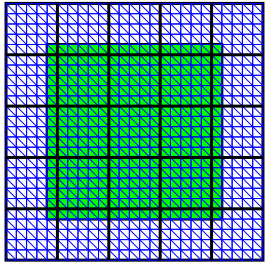}}
{\caption{Examples of an overlapping subdomain $\Omega_i^\prime:=\Omega_{i,d,h}$ and a subregion
$\widetilde{\Omega}_i:=\Omega_{i,k,H}^\delta$ containing $\Omega_{i}^\prime$.}
\label{Fig:submesh-1}} 
\end{figure}

We first consider the coarse problem formed by directly using the functions
in the auxiliary space $V_{aux}$. In this case, we find $\phi_j^{(i)}$ in $V_{aux}$
by solving the following eigenvalue problem in each overlapping subdomain $\Omega_i^\prime=\Omega_{i,d,h}$, i.e.,
$$\int_{\Omega_i^\prime} \rho(x) \nabla v \nabla w \, dx = \lambda \int_{\Omega_i^\prime} \rho(x) | \nabla \theta_i(x)|^2
v w \, dx,\quad \forall w \in V(\Omega_i^\prime)$$
and we introduce the coarse basis
functions given by $I^h(\theta_i \phi_j^{(i)})$. This approach is
similar to that considered in \cite{Galvis1}.
In Table~\ref{TB1:oldmethod:uniform},
we present the performance of the two-level overlapping Schwarz method
for a model problem with a uniform constant $\rho(x)=1$ and
in Table~\ref{TB2:oldmethod:random} for the case with $\rho(x)$ being
highly random in the range $(1,\; 10^6)$.
We observe that the minimum eigenvalues seem to robust to the contrast
in the coefficient $\rho(x)$ while they clearly show the dependence
on the overlapping width, $2 \delta$ with $\delta=dh$ for both examples
in Tables~\ref{TB1:oldmethod:uniform} and \ref{TB2:oldmethod:random}.

\begin{table}
\footnotesize \caption{Performance of coarse space from $V_{aux}$ for a model with  $\rho(x)=1$,
$\Lambda=1+\log (m+2d)$: iter (number of iterations),
$\lambda_{\min}$ (minimum eigenvalues), $\lambda_{\max}$ (maximum eigenvalues), $\kappa$ (condition numbers),
$pD$ (average number of coarse basis per subdomain).}
\label{TB1:oldmethod:uniform} \centering

\begin{tabular}{|c|c|c|c|c|c|c|}
  \hline
  $n(m)$ & $d$  &  $iter$ & $\lambda_{min}$  &  $\lambda_{max} $  & $\kappa$ & $pD$\\
 \hline
  6(10) & 1  &   42 &    0.13         &  4.00  &30.51 &1     \\
        & 2  &   38 &    0.18         &  4.17  &23.80 &1   \\
        & 3  &   34 &    0.22         &  4.35  &19.70 &1   \\
        & 4  &   30 &    0.27         &  4.53  &16.58 &1   \\
        & 5  &   28 &    0.34         &  4.71  &14.01 &1   \\
  \hline
  10(10) & 1  &  47  &   0.12         &  4.00  &32.66&1     \\
         & 2  &  45  &   0.15         &  4.17  &28.28&1  \\
         & 3  &  42  &   0.17         &  4.37  &25.60&1  \\
         & 4  &  38  &   0.20         &  4.55  &23.27&1   \\
         & 5  &  35  &   0.22         &  4.72  &21.11&1   \\
  \hline

\end{tabular}
\end{table}

\begin{table}
\footnotesize \caption{Performance of coarse space from $V_{aux}$ for a model with  highly random $\rho(x)$ in $(1,\,10^6)$,
$\Lambda=1+\log (m+2d)$: iter (number of iterations),
$\lambda_{\min}$ (minimum eigenvalues), $\lambda_{\max}$ (maximum eigenvalues), $\kappa$ (condition numbers),
$pD$ (average number of coarse basis per subdomain).}
\label{TB2:oldmethod:random} \centering
\begin{tabular}{|c|c|c|c|c|c|c|}
  \hline
  $n(m)$ & $d$  &  $iter$ & $\lambda_{min}$  &  $\lambda_{max} $  & $\kappa$ & $pD$\\
 \hline
  6(10) & 1  &   64 &    0.06         &  5.00  &  85.05  &6.17     \\
        & 2  &   47 &    0.14         &  5.00  &  36.66  &6.75   \\
        & 3  &   38 &    0.30         &  5.00  &  16.84  &6.69   \\
        & 4  &   32 &    0.51         &  4.99  &   9.71  &7.75   \\
        & 5  &   30 &    0.47         &  4.99  &  10.52  &7.28   \\
  \hline
  10(10) & 1  &  77  &   0.07         &  5.00  &  75.10 &6.54     \\
         & 2  &  53  &   0.16         &  5.00  &  31.29 &7.31  \\
         & 3  &  37  &   0.35         &  4.99  &  14.36 &7.26  \\
         & 4  &  35  &   0.44         &  4.99  &  11.30 &7.78   \\
         & 5  &  32  &   0.50         &  4.99  &   9.99 &7.99   \\
  \hline

\end{tabular}

\end{table}

In Table~\ref{TB3:uniform:k13}, we present
the performance of the new coarse space $V_{ms}$ for the case with
$k=1$ and $k=3$, respectively, for the model with $\rho(x)=1$.
We can observe that the minimum eigenvalues are bigger than
those in Table~\ref{TB1:oldmethod:uniform}.
For the case with smaller
oversampling region with $k=1$, i.e., with only one layer of
neighboring subdomains, we can get quite robust minimum eigenvalues
even for the case with smaller overlaps.

\begin{table}
\footnotesize \caption{Performance of coarse space from $V_{ms}$ for a model with  $\rho(x)=1$,
$\Lambda=1+\log m$ and $k=1$ (upper table), $k=3$ (lower table): iter (number of iterations),
$\lambda_{\min}$ (minimum eigenvalues), $\lambda_{\max}$ (maximum eigenvalues), $\kappa$ (condition numbers),
$pD$ (average number of coarse basis per subdomain).}
\label{TB3:uniform:k13} \centering
\begin{tabular}{|c||c|c|c|c|c|c|c|c|}
  \hline
 k&  $n(m)$ & $d$ &  $iter$ & $\lambda_{min}$  &  $\lambda_{max} $  & $\kappa$ & $pD$\\
 \hline
 \multirow{10}{*}{1}    &   6(10) & 1  &   26 &    0.43         &  4.02  &9.29  &2.89     \\
  &      & 2  &   25 &    0.57         &  4.06  &7.16  &1   \\
  &     & 3  &   21 &    0.79         &  4.05  &5.12  &1   \\
  &      & 4  &   21 &    0.94         &  4.12  &4.36  &1   \\
  &      & 5  &   20 &    1.00         &  4.27  &4.27  &1   \\
& 10(10) & 1  &  27  &   0.39         &  4.03  &10.25&2.96     \\
  &       & 2  &  26  &   0.53         &  4.16  &7.81 &1  \\
  &       & 3  &  22  &   0.76         &  4.05  &5.33 &1  \\
  &       & 4  &  20  &   0.94         &  4.12  &4.38 &1   \\
  &       & 5  &  21  &   1.00         &  4.25  &4.26 &1   \\
  \hline
\multirow{10}{*}{3} &   6(10) & 1  &   25 &    0.47         &  4.01  &8.46  &2.89     \\
       & & 2  &   23 &    0.60         &  4.02  &6.65  &1   \\
       & & 3  &   21 &    0.80         &  4.06  &5.05  &1   \\
       & & 4  &   21 &    0.94         &  4.12  &4.36  &1   \\
       & & 5  &   20 &    1.00         &  4.27  &4.27  &1   \\
 & 10(10) & 1  &  27  &   0.41         &  4.01  &9.91 &2.96     \\
  &       & 2  &  24  &   0.55         &  4.02  &7.30 &1  \\
  &       & 3  &  22  &   0.77         &  4.06  &5.29 &1  \\
  &       & 4  &  20  &   0.94         &  4.11  &4.38 &1   \\
  &       & 5  &  21  &   1.00         &  4.26  &4.26 &1   \\
  \hline
\end{tabular}
\end{table}

In Table~\ref{TB5:random1:k13}, we present
the performance of the new coarse space $V_{ms}$ for the case with
$k=1$ and $k=3$, respectively, for the model with $\rho(x)$ in the range $(1,\; 10^6)$.
We can observe that our method gives more robust results for this example than those from the uniform case
considered in Table~\ref{TB3:uniform:k13}.
The minimum eigenvalues are less dependent on the overlapping width and they are quite close
to the value $1$ even when $d=2$.
In addition, the number of coarse basis per subdomain is also
increasing about one even for the smaller $d=1,2$, compared to $d=5$.
The condition numbers
seem more robust when the oversampling subregion size is larger, i.e., when $k=3$ for $10 \times 10$
subdomain partition.
For this case, the minimum eigenvalues are very close to one when $d=2,3,4,5$.
With the smaller $k=1$, we can still achieve quite good and robust results.
\begin{table}
\footnotesize \caption{Performance of coarse space from $V_{ms}$ for a model with random $\rho(x)$
in the range $(1,\, 10^6)$,
$\Lambda=1+\log m$ and $k=1$ (upper table), $k=3$ (lower table): iter (number of iterations),
$\lambda_{\min}$ (minimum eigenvalues), $\lambda_{\max}$ (maximum eigenvalues), $\kappa$ (condition numbers),
$pD$ (average number of coarse basis per subdomain).}
\label{TB5:random1:k13} \centering
\begin{tabular}{|c||c|c|c|c|c|c|c|}
  \hline
 $k$ &  $n(m)$ & $d$ &  $iter$ & $\lambda_{min}$  &  $\lambda_{max} $  & $\kappa$ & $pD$\\
 \hline
  \multirow{10}{*}{1} &6(10) & 1  &   22 &    0.84         &  4.23  &5.02  &5.83     \\
    &    & 2  &   25 &    0.87         &  4.99  &5.71  &5.75   \\
    &    & 3  &   24 &    0.98         &  4.99  &5.07  &5.50   \\
    &    & 4  &   24 &    1.00         &  4.99  &4.99  &4.97   \\
    &    & 5  &   24 &    1.00         &  4.98  &4.98  &4.42   \\
  &10(10) & 1 &  25  &   0.65         &  4.15  &6.39  &6.35     \\
  &       & 2 &  26  &   0.83         &  4.99  &6.02  &6.14  \\
  &       & 3 &  25  &   0.87         &  4.99  &5.76  &5.60  \\
  &       & 4 &  25  &   0.88         &  4.99  &5.69  &5.19   \\
  &       & 5 &  25  &   0.89         &  4.99  &5.60  &4.71   \\
  \hline
\multirow{10}{*}{3} & 6(10) & 1  &   21 &    0.85   &  4.21  &4.96  &5.83     \\
                    & & 2  &   24 &    0.87         &  4.99  &5.71  &5.75   \\
                    & & 3  &   24 &    0.99         &  4.99  &5.03  &5.50   \\
                    & & 4  &   24 &    1.00         &  4.99  &4.99  &4.97   \\
                    & & 5  &   24 &    1.00         &  4.98  &4.98  &4.42   \\
   & 10(10) & 1  &  22  &   0.73         &  4.15  &5.72  &6.35     \\
   &      & 2  &  24  &   0.92         &  4.99  &5.45  &6.14  \\
   &      & 3  &  23  &   0.96         &  4.99  &5.21  &5.60  \\
   &      & 4  &  24  &   1.00         &  4.99  &4.98  &5.19   \\
   &      & 5  &  24  &   1.00         &  4.99  &4.98  &4.71   \\
  \hline
\end{tabular}
\end{table}

In Table~\ref{TB5:random2:k13}, we present
the performance of the new coarse space $V_{ms}$ for the case with
$k=1$ and $k=3$, respectively, for the model with $\rho(x)$ in the range $(10^{-3},\; 10^3)$.
We can observe similar results to those in previous Table~\ref{TB5:random1:k13}.
Even with smaller oversampling subregion and with smaller overlapping width,
we can get quite good performance.
For example, with $k=1$ and $d=1$,
the proposed method require only one more iteration than the case with $k=3$ and $d=5$
when $10 \times 10$ subdomain partition is considered.
For this specific example, we observe that the number of coarse basis per subdomain
is 6.51 for the case with $k=1$ and $d=1$, while that is about 4.77 per subdomain for the case with $k=3$ and $d=5$.
With only one or two more coarse basis functions per subdomain, we can obtain a robust and efficient coarse problem.
We note that in our analysis we have shown that the oversampling size $k$ can be chosen large enough
to control the contrast in the coefficient and the gradient of partition of unity functions, otherwise
they will affect the resulting condition numbers.
We can conclude that in practice our method gives good performance even for the smaller oversampling
size, i.e, with $k=1$, and thus
the proposed method seems very robust to the contrast in the coefficient and the overlapping
width in the subdomain partition.

\begin{table}
\footnotesize \caption{Performance of coarse space from $V_{ms}$ for a model with random $\rho(x)$
in the range $(10^{-3},\, 10^3)$,
$\Lambda=1+\log m$ and $k=1$ (upper table), $k=3$ (lower table): iter (number of iterations),
$\lambda_{\min}$ (minimum eigenvalues), $\lambda_{\max}$ (maximum eigenvalues), $\kappa$ (condition numbers),
$pD$ (average number of coarse basis per subdomain).}
\label{TB5:random2:k13} \centering

\begin{tabular}{|c||c|c|c|c|c|c|c|}
  \hline
 $k$ &  $n(m)$ & $d$ & $iter$ & $\lambda_{min}$  &  $\lambda_{max} $  & $\kappa$ & $pD$\\
 \hline
 \multirow{10}{*}{1} &  6(10) & 1  &   23 &    0.69         &  4.23  &6.13  &5.83     \\
      &  & 2  &   24 &    0.91         &  4.98  &5.49  &5.67   \\
      &  & 3  &   24 &    0.97         &  5.00  &5.13  &5.69   \\
      &  & 4  &   24 &    1.00         &  5.00  &5.00  &5.14   \\
      &  & 5  &   24 &    1.00         &  4.98  &4.98  &4.67   \\
 & 10(10) & 1  &  26  &   0.52         &  4.45  &8.57 &6.51     \\
 &        & 2  &  25  &   0.77         &  4.99  &6.51 &6.42  \\
 &        & 3  &  26  &   0.82         &  4.98  &6.07 &6.13  \\
 &        & 4  &  25  &   0.91         &  4.99  &5.46 &5.33   \\
 &        & 5  &  25  &   0.96         &  4.99  &5.20 &4.77   \\
  \hline
 \multirow{10}{*}{3} &  6(10) & 1  &   23 &    0.69         &  4.23  &6.12  &5.83     \\
     &   & 2  &   24 &    0.91         &  4.98  &5.48  &5.67   \\
     &   & 3  &   24 &    1.00         &  5.00  &5.01  &5.69   \\
     &   & 4  &   24 &    1.00         &  5.00  &5.00  &5.14   \\
     &   & 5  &   24 &    1.00         &  4.98  &4.98  &4.67   \\
 & 10(10) & 1  &  25  &   0.52         &  4.45  &8.55 &6.51     \\
 &        & 2  &  25  &   0.88         &  4.98  &5.65 &6.42  \\
 &        & 3  &  24  &   1.00         &  4.98  &4.98 &6.13  \\
 &        & 4  &  24  &   1.00         &  4.98  &4.98 &5.33   \\
 &        & 5  &  25  &   1.00         &  4.99  &4.99 &4.77   \\
  \hline
\end{tabular}

\end{table}


\end{document}